%2multibyte Version: 5.50.0.2960 CodePage: 1254
% Math. Notes Miskolc sample file
% 6 Oct 2003
% ----------------------------------------------------------------- 
% IMPORTANT NOTES
% (1) Please do NOT typeset your frontmatter by hand; use the \title{},
% \author{}, \address{}, etc tags instead (as in the examples below).
% (2) Please refer to equations as \label{..} - \ref{..}, and use
% automatic citation commands.
% -----------------------------------------------------------------
% please use amsart at 11pt
% to get Refs. [1,2,3] typeset as [1--3] automatically
% text area size
% an example of defining a custom
% mathematical function
% to get equations numbered
% automatically according to section
% number as (1.1), (1.2), etc

\documentclass[11pt,reqno]{amsart}
%%%%%%%%%%%%%%%%%%%%%%%%%%%%%%%%%%%%%%%%%%%%%%%%%%%%%%%%%%%%%%%%%%%%%%%%%%%%%%%%%%%%%%%%%%%%%%%%%%%%%%%%%%%%%%%%%%%%%%%%%%%%%%%%%%%%%%%%%%%%%%%%%%%%%%%%%%%%%%%%%%%%%%%%%%%%%%%%%%%%%%%%%%%%%%%%%%%%%%%%%%%%%%%%%%%%%%%%%%%%%%%%%%%%%%%%%%%%%%%%%%%%%%%%%%%%
\usepackage{amsmath}
\usepackage{amsfonts}
\usepackage{amssymb,latexsym}
\usepackage{cite}
\usepackage[height=190mm,width=130mm]{geometry}

\setcounter{MaxMatrixCols}{10}
%TCIDATA{OutputFilter=LATEX.DLL}
%TCIDATA{Version=5.50.0.2960}
%TCIDATA{Codepage=1254}
%TCIDATA{<META NAME="SaveForMode" CONTENT="1">}
%TCIDATA{BibliographyScheme=BibTeX}
%TCIDATA{LastRevised=Friday, February 16, 2018 14:14:38}
%TCIDATA{<META NAME="GraphicsSave" CONTENT="32">}

\theoremstyle{plain}
\newtheorem{theorem}{Theorem}

\newtheorem{corollary}{Corollary}

\theoremstyle{definition}
\newtheorem{definition}{Definition}
\theoremstyle{remark}

\numberwithin{equation}{section} 
\input{tcilatex}

\begin{document}
\title[The generalized bi-periodic Fibonacci quaternions and octonions]{The
generalized bi-periodic Fibonacci quaternions and octonions}
\author{Elif TAN}
\address{Department of Mathematics, Ankara University, Science Faculty,
06100 Tandogan Ankara, Turkey.}
\email{etan@ankara.edu.tr}
\author{Murat SAHIN}
\address{Department of Mathematics, Ankara University, Science Faculty,
06100 Tandogan Ankara, Turkey}
\email{msahin@ankara.edu.tr}
\author{Semih YILMAZ}
\address{Department of Mathematics, K\i r\i kkale University, 71450 K\i r\i
kkale, Turkey.}
\email{syilmaz@kku.edu.tr}
\subjclass[2000]{ 11B39, 05A15, 11R52}
\keywords{Quaternions, octonions, Fibonacci sequence, bi-periodic Fibonacci
sequence}

\begin{abstract}
In this paper, we present a further generalization of the bi-periodic
Fibonacci quaternions and octonions. We give the generating function, the
Binet formula, and some basic properties of these quaternions and octonions.
The results of this paper not only give a generalization of the bi-periodic
Fibonacci quaternions and octonions, but also include new results such as
the matrix representation and the norm value of the generalized bi-periodic
Fibonacci quaternions.
\end{abstract}

\maketitle

\section{Introduction}

There has been growing interest in quaternions that have been extensively
studied in both applied and theoretical sciences. In particular, quaternions
are very good at representing rotations in three-dimensional space. The
octonions are invented as an analog to the quaternions, and related to the
exceptional Lie algebra. Also they have applications in areas such as super
string theory, projective geometry, topology, and Jordan algebras. For more
details about quaternions and octonions we refer to \cite{Adler, Baez, Ward}.

The quaternion algebra 
\begin{equation}
\mathbf{H}=\{\sum_{l=0}^{3}a_{l}e_{l}:a_{l}\in 
%TCIMACRO{\U{211d} }%
%BeginExpansion
\mathbb{R}
%EndExpansion
\}  \label{q1}
\end{equation}%
is a four dimensional non-commutative vector space over $%
%TCIMACRO{\U{211d} }%
%BeginExpansion
\mathbb{R}
%EndExpansion
$ and the basis satisfy the following multiplication rules:%
\begin{eqnarray}
e_{l}^{2} &=&-1,\text{ }l\in \{1,2,3\};  \notag \\
e_{1}e_{2} &=&-e_{2}e_{1}=e_{3},\text{ }e_{2}e_{3}=-e_{3}e_{2}=e_{1},\text{ }%
e_{3}e_{1}=-e_{1}e_{3}=e_{2}.  \label{q2}
\end{eqnarray}%
($e_{0}$ can be identified with real number $1$). Also, the quaternion
algebra $\mathbf{H}$ is isomorphic to the Clifford algebra $C\ell _{0,2}$.
There are several studies on different types of sequences over quaternion
algebra. For a survey on these researches we refer to \cite{flaut, horadam1,
Iakin, Iakin1, Iyer, Iyer1, halici, halici1, cimen, catarino, szynal,
Ramirez}.

Recently, Tan and et. al. \cite{chaos1, chaos2} introduced a new
generalization of the Fibonacci and Lucas quaternions, named as, the
bi-periodic Fibonacci and Lucas quaternions. They are emerged as a
generalization of the best known quaternions in the literature, such as
classical Fibonacci and Lucas quaternions in \cite{horadam1}, Pell and
Pell-Lucas quaternions in \cite{cimen}, $k-$Fibonacci and $k-$Lucas
quaternions in \cite{Ramirez}.

For $n\geq 0,$ the bi-periodic Fibonacci and Lucas quaternions defined as%
\begin{equation}
Q_{n}=\sum_{l=0}^{3}q_{n+l}e_{l}\text{ \ \ \ and \ \ \ }P_{n}=%
\sum_{l=0}^{3}p_{n+l}e_{l},  \label{1}
\end{equation}%
respectively. Note that $q_{n}$ is the $n$th bi-periodic Fibonacci number
and defined by%
\begin{equation}
q_{n}=\left\{ 
\begin{array}{ll}
aq_{n-1}+q_{n-2}, & \mbox{ if }n\mbox{ is even} \\ 
bq_{n-1}+q_{n-2}, & \mbox{ if }n\mbox{ is odd}%
\end{array}%
\right. ,\text{ }n\geq 2  \label{2}
\end{equation}%
with initial values $q_{0}=0$, $q_{1}=1$ and $a$, $b$ are nonzero numbers
and $p_{n}$ is the $n$th bi-periodic Lucas number and defined by%
\begin{equation}
p_{n}=\left\{ 
\begin{array}{ll}
bp_{n-1}+p_{n-2}, & \mbox{ if }n\mbox{ is even} \\ 
ap_{n-1}+p_{n-2}, & \mbox{ if }n\mbox{ is odd}%
\end{array}%
\right. ,\text{ }n\geq 2  \label{3}
\end{equation}%
with the initial conditions $p_{0}=2$, $p_{1}=a$.

The Binet formula for the bi-periodic Fibonacci quaternion is given by%
\begin{equation}
Q_{n}=\left\{ 
\begin{array}{ll}
\frac{1}{\left( ab\right) ^{\left\lfloor \frac{n}{2}\right\rfloor }}\frac{%
\alpha ^{\ast }\alpha ^{n}-\beta ^{\ast }\beta ^{n}}{\alpha -\beta }, & %
\mbox{ if }n\mbox{ is even} \\ 
\frac{1}{\left( ab\right) ^{\left\lfloor \frac{n}{2}\right\rfloor }}\frac{%
\alpha ^{\ast \ast }\alpha ^{n}-\beta ^{\ast \ast }\beta ^{n}}{\alpha -\beta 
}, & \mbox{ if }n\mbox{ is odd}%
\end{array}%
\right. \text{ \ \ \ \ \ \ \ \ \ \ \ }  \label{4}
\end{equation}%
and the Binet formula for the bi-periodic Lucas quaternion is%
\begin{equation}
P_{n}=\left\{ 
\begin{array}{ll}
\frac{1}{\left( ab\right) ^{\left\lfloor \frac{n+1}{2}\right\rfloor }}\left(
\alpha ^{\ast \ast }\alpha ^{n}+\beta ^{\ast \ast }\beta ^{n}\right) , & %
\mbox{ if }n\mbox{ is even} \\ 
\frac{1}{\left( ab\right) ^{\left\lfloor \frac{n+1}{2}\right\rfloor }}\left(
\alpha ^{\ast }\alpha ^{n}+\beta ^{\ast }\beta ^{n}\right) , & \mbox{ if }n%
\mbox{ is odd}%
\end{array}%
\right.  \label{5}
\end{equation}%
where%
\begin{eqnarray}
\alpha &:&=\frac{ab+\sqrt{a^{2}b^{2}+4ab}}{2},\beta :=\frac{ab-\sqrt{%
a^{2}b^{2}+4ab}}{2}\text{ \ \ \ }  \notag \\
\alpha ^{\ast } &:&=\sum_{l=0}^{3}\frac{a^{\zeta \left( l+1\right) }}{\left(
ab\right) ^{\left\lfloor \frac{l}{2}\right\rfloor }}\alpha ^{l}e_{l},\text{ }%
\beta ^{\ast }:=\sum_{l=0}^{3}\frac{a^{\zeta \left( l+1\right) }}{\left(
ab\right) ^{\left\lfloor \frac{l}{2}\right\rfloor }}\beta ^{l}e_{l}\text{ } 
\notag \\
\alpha ^{\ast \ast } &:&=\sum_{l=0}^{3}\frac{a^{\zeta \left( l\right) }}{%
\left( ab\right) ^{\left\lfloor \frac{l+1}{2}\right\rfloor }}\alpha
^{l}e_{l},\text{ }\beta ^{\ast \ast }:=\sum_{l=0}^{3}\frac{a^{\zeta \left(
l\right) }}{\left( ab\right) ^{\left\lfloor \frac{l+1}{2}\right\rfloor }}%
\beta ^{l}e_{l}.  \label{6}
\end{eqnarray}%
Here $\zeta \left( n\right) =n-2\left\lfloor \frac{n}{2}\right\rfloor $ is
the parity function, i.e., $\zeta \left( n\right) =0$ when $n$ is even and $%
\zeta \left( n\right) =1$ when $n$ is odd. Assume that $a^{2}b^{2}+4ab\neq
0. $ Also we have $\alpha +\beta =ab$, $\alpha -\beta =\sqrt{a^{2}b^{2}+4ab}$
and $\alpha \beta =-ab.$ For the details of the bi-periodic Fibonacci and
Lucas sequences see \cite{Yayenie, Edson, Bilgici, Sahin, Panario}.

The octonion algebra%
\begin{equation}
\mathbf{O}=\{\sum_{l=0}^{7}a_{l}e_{l}:a_{l}\in 
%TCIMACRO{\U{211d} }%
%BeginExpansion
\mathbb{R}
%EndExpansion
\}  \label{o1}
\end{equation}%
is an eight dimensional non-commutative and non-associative vector space
over $%
%TCIMACRO{\U{211d} }%
%BeginExpansion
\mathbb{R}
%EndExpansion
,$ and the multiplication rules can be derived from the following table :%
\begin{equation*}
\underset{%
\begin{array}{c}
\\ 
\text{\textit{Table 1: Octonion Multiplication table}}%
\end{array}%
}{%
\begin{tabular}{l|llllllll}
& $1$ & $e_{1}$ & $e_{2}$ & $e_{3}$ & $e_{4}$ & $e_{5}$ & $e_{6}$ & $e_{7}$
\\ \hline
$1$ & $1$ & $e_{1}$ & $e_{2}$ & $e_{3}$ & $e_{4}$ & $e_{5}$ & $e_{6}$ & $%
e_{7}$ \\ 
$e_{1}$ & $e_{1}$ & $-1$ & $e_{3}$ & $-e_{2}$ & $e_{5}$ & $-e_{4}$ & $-e_{7}$
& $e_{6}$ \\ 
$e_{2}$ & $e_{2}$ & $-e_{3}$ & $-1$ & $e_{1}$ & $e_{6}$ & $e_{7}$ & $-e_{4}$
& $-e_{5}$ \\ 
$e_{3}$ & $e_{3}$ & $e_{2}$ & $-e_{1}$ & $-1$ & $e_{7}$ & $-e_{6}$ & $e_{5}$
& $-e_{4}$ \\ 
$e_{4}$ & $e_{4}$ & $-e_{5}$ & $-e_{6}$ & $-e_{7}$ & $-1$ & $e_{1}$ & $e_{2}$
& $e_{3}$ \\ 
$e_{5}$ & $e_{5}$ & $e_{4}$ & $-e_{7}$ & $e_{6}$ & $-e_{1}$ & $-1$ & $-e_{3}$
& $e_{2}$ \\ 
$e_{6}$ & $e_{6}$ & $e_{7}$ & $e_{4}$ & $-e_{5}$ & $-e_{2}$ & $e_{3}$ & $-1$
& $-e_{1}$ \\ 
$e_{7}$ & $e_{7}$ & $-e_{6}$ & $e_{5}$ & $e_{4}$ & $-e_{3}$ & $-e_{2}$ & $%
e_{1}$ & $-1$%
\end{tabular}%
}
\end{equation*}

Motivating by the results in \cite{chaos1, chaos2}, Yilmaz and et. al. \cite%
{yilmaz1, yilmaz2} introduced the bi-periodic Fibonacci and Lucas octonions
as%
\begin{equation}
OQ_{n}=\sum_{l=0}^{7}q_{n+l}e_{l}\text{ \ \ \ \ and\ \ \ \ \ }%
OP_{n}=\sum_{l=0}^{7}p_{n+l}e_{l},  \label{7}
\end{equation}%
respectively.

The Binet formula for the bi-periodic Fibonacci octonion is%
\begin{equation}
OQ_{n}=\left\{ 
\begin{array}{ll}
\frac{1}{\left( ab\right) ^{\left\lfloor \frac{n}{2}\right\rfloor }}\frac{%
\gamma ^{\ast }\alpha ^{n}-\delta ^{\ast }\beta ^{n}}{\alpha -\beta }, & %
\mbox{ if }n\mbox{ is even} \\ 
\frac{1}{\left( ab\right) ^{\left\lfloor \frac{n}{2}\right\rfloor }}\frac{%
\gamma ^{\ast \ast }\alpha ^{n}-\delta ^{\ast \ast }\beta ^{n}}{\alpha
-\beta }, & \mbox{ if }n\mbox{ is odd}%
\end{array}%
\right. \text{ \ \ \ \ \ \ \ \ }  \label{8}
\end{equation}%
and the Binet formula for the bi-periodic Lucas octonion is%
\begin{equation}
OP_{n}=\left\{ 
\begin{array}{ll}
\frac{1}{\left( ab\right) ^{\left\lfloor \frac{n+1}{2}\right\rfloor }}\left(
\gamma ^{\ast \ast }\alpha ^{n}+\delta ^{\ast \ast }\beta ^{n}\right) , & %
\mbox{ if }n\mbox{ is even} \\ 
\frac{1}{\left( ab\right) ^{\left\lfloor \frac{n+1}{2}\right\rfloor }}\left(
\gamma ^{\ast }\alpha ^{n}+\delta ^{\ast }\beta ^{n}\right) , & \mbox{ if }n%
\mbox{ is odd}%
\end{array}%
\right.  \label{9}
\end{equation}%
where%
\begin{eqnarray}
\gamma ^{\ast } &:&=\sum_{l=0}^{7}\frac{a^{\zeta \left( l+1\right) }}{\left(
ab\right) ^{\left\lfloor \frac{l}{2}\right\rfloor }}\alpha ^{l}e_{l},\text{ }%
\delta ^{\ast }:=\sum_{l=0}^{7}\frac{a^{\zeta \left( l+1\right) }}{\left(
ab\right) ^{\left\lfloor \frac{l}{2}\right\rfloor }}\beta ^{l}e_{l}\text{ \
\ \ \ \ \ \ }  \notag \\
\gamma ^{\ast \ast } &:&=\sum_{l=0}^{7}\frac{a^{\zeta \left( l\right) }}{%
\left( ab\right) ^{\left\lfloor \frac{l+1}{2}\right\rfloor }}\alpha
^{l}e_{l},\text{ }\delta ^{\ast \ast }:=\sum_{l=0}^{7}\frac{a^{\zeta \left(
l\right) }}{\left( ab\right) ^{\left\lfloor \frac{l+1}{2}\right\rfloor }}%
\beta ^{l}e_{l}.  \label{10}
\end{eqnarray}%
For related studies on different types of sequences over octonion algebra,
we refer to \cite{oct, oct1, szynal, catarino, halici2}.

In this paper, we present a further generalization of the bi-periodic
Fibonacci quaternions and octonions. We give the generating function, the
Binet formula, and some basic properties of these quaternions and octonions.
This new generalization can be seen as a generalization of the papers which
are given in \cite{chaos1, chaos2, yilmaz1, yilmaz2, catarino}. The results
of this paper not only give a generalization of the bi-periodic Fibonacci
quaternions and octonions, but also include new results such as the matrix
representation and the norm value of the generalized bi-periodic Fibonacci
sequence. The main contribution of this study is one can get a great number
of distinct quaternion and octonion sequences by providing the initial
values in the generalized bi-periodic Fibonacci sequence. To this end, first
consider the generalized bi-periodic Fibonacci sequence, $\left\{
w_{n}\right\} ,$ which is defined in \cite{Edson}\textit{\ }as:%
\begin{equation}
w_{n}=\left\{ 
\begin{array}{ll}
aw_{n-1}+w_{n-2}, & \mbox{ if }n\mbox{ is even} \\ 
bw_{n-1}+w_{n-2}, & \mbox{ if }n\mbox{ is odd}%
\end{array}%
\right. ,n\geq 2  \label{11}
\end{equation}%
with arbitrary initial conditions $w_{0},w_{1}$ where $w_{0},w_{1},a,b$ are
nonzero numbers.\textit{\ }Note that, if we take $w_{0}=0,w_{1}=1$ in $%
\{w_{n}\},$ we get the bi-periodic Fibonacci sequence $\left\{ q_{n}\right\} 
$ in (\ref{2}). If we take $w_{0}=2,w_{1}=b,$ and switch $a$ and $b$ in $%
\{w_{n}\},$ we get the bi-periodic Lucas sequence $\{p_{n}\}$ in (\ref{3}).

In \cite{tan}, the Binet formula of the sequence $\{w_{n}\}$ is given by%
\begin{equation}
w_{n}=\frac{a^{\zeta \left( n+1\right) }}{\left( ab\right) ^{\left\lfloor 
\frac{n}{2}\right\rfloor }}\left( A\alpha ^{n-1}-B\beta ^{n-1}\right) \text{
\ }  \label{13}
\end{equation}%
where%
\begin{equation}
A:=\frac{\alpha w_{1}+bw_{0}}{\alpha -\beta }\text{ and }B:=\frac{\beta
w_{1}+bw_{0}}{\alpha -\beta }.  \label{14}
\end{equation}%
For more results related to the sequence $\{w_{n}\},$ we refer to \cite{tan}.

\section{The generalized bi-periodic Fibonacci quaternions}

In this section, we introduce the generalized bi-periodic Fibonacci
quaternions and give some basic properties of them. These results can be
seen as a generalization of the results in \cite{chaos1, chaos2, catarino}.

\begin{definition}
The generalized bi-periodic Fibonacci quaternions $\{W_{n}\}$ are defined by%
\begin{equation}
W_{n}=\sum_{l=0}^{3}w_{n+l}e_{l},  \label{15}
\end{equation}%
where $w_{n}$ is defined in (\ref{11}).
\end{definition}

In the following, we give several number of different sequences which are
special cases of $\{W_{n}\}:$

\begin{enumerate}
\item If we take the initial conditions $w_{0}=0$ and $w_{1}=1,$ we get the
bi-periodic Fibonacci quaternions in \cite{chaos1}.

\item If we take the initial conditions $w_{0}=2$ and $w_{1}=b,$ we get the
bi-periodic Lucas quaternions in \cite{chaos2}. (Note that we switch $a$ and 
$b$).

\item If we take the initial conditions $w_{0}=w_{1}=1$ and $a=b=2$ in $%
\{w_{n}\}$, we get the modified Pell quaternion numbers in \cite{catarino}.

\item If we take $a=b=1$ in $\{w_{n}\},$ we get the Horadam quaternion
numbers in \cite{halici1} with the case of $q=1$.
\end{enumerate}

\begin{theorem}
The generating function for the generalized bi-periodic Fibonacci
quaternions $W_{n}$ is%
\begin{equation}
G\left( t\right) =\frac{W_{0}+\left( W_{1}-bW_{0}\right) t+\left( a-b\right)
\sum_{s=0}^{3}R\left( t,s\right) e_{s}}{1-bt-t^{2}}  \label{16}
\end{equation}%
where%
\begin{equation}
R\left( t,s\right) :=\left( f\left( t\right) -\sum_{k=1}^{\left\lfloor \frac{%
s+1}{2}\right\rfloor }w_{2k-1}t^{2k-1}\right) t^{1-s},\text{ \ \ \ \ \ \ \ \
\ \ \ \ }  \label{17}
\end{equation}%
\ 
\begin{equation}
f\left( t\right) :=\sum_{n=1}^{\infty }w_{2n-1}t^{2n-1}=\frac{w_{1}t+\left(
bw_{0}-w_{1}\right) t^{3}}{1-\left( ab+2\right) t^{2}+t^{4}}.\text{ \ \ \ \
\ \ \ \ }  \label{18}
\end{equation}
\end{theorem}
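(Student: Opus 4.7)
The plan is to split the argument into two parts: first derive the closed form \eqref{18} for the auxiliary series $f(t)$, and then compute $(1-bt-t^{2})G(t)$ directly, identifying the remainder with the numerator in \eqref{16}.

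For $f(t)$, I would exploit the fact that substituting the even recurrence $w_{2k}=aw_{2k-1}+w_{2k-2}$ into the odd recurrence $w_{2k+1}=bw_{2k}+w_{2k-1}$, and then eliminating $bw_{2k-2}$ via $bw_{2k-2}=w_{2k-1}-w_{2k-3}$, produces the pure two-step relation $w_{2k+1}=(ab+2)w_{2k-1}-w_{2k-3}$ valid for all $k\geq 2$. Consequently $\bigl(1-(ab+2)t^{2}+t^{4}\bigr)f(t)$ collapses to its first two surviving terms $w_{1}t+(w_{3}-(ab+2)w_{1})t^{3}$; a short computation gives $w_{3}=(ab+1)w_{1}+bw_{0}$, so the numerator simplifies to $w_{1}t+(bw_{0}-w_{1})t^{3}$, proving \eqref{18}.

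For the main identity, I would write $G(t)=\sum_{l=0}^{3}e_{l}g_{l}(t)$ with $g_{l}(t):=\sum_{n\geq 0}w_{n+l}t^{n}$, and compute
\[
(1-bt-t^{2})g_{l}(t)=w_{l}+(w_{l+1}-bw_{l})t+\sum_{n\geq 2}\bigl(w_{n+l}-bw_{n+l-1}-w_{n+l-2}\bigr)t^{n}.
\]
The bracketed coefficient vanishes whenever $n+l$ is odd (the odd branch of \eqref{11} applies) and equals $(a-b)w_{n+l-1}$ whenever $n+l$ is even. Summing over $l$ with weights $e_{l}$ already yields $W_{0}+(W_{1}-bW_{0})t$ from the boundary terms, so the task reduces to identifying, for each $l$, the error series $\sum_{n\geq 2,\ n+l\text{ even}}w_{n+l-1}t^{n}$ with $R(t,l)$.

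This identification is the main bookkeeping step and the only place where careful index tracking is needed. For $l=2j$ set $n=2k$ and $m=k+j$, while for $l=2j+1$ set $n=2k+1$ and $m=k+j+1$; in both cases the error sum becomes $\sum_{m\geq \lfloor (l+1)/2\rfloor +1}w_{2m-1}t^{2m-l}$, which equals $t^{1-l}\bigl(f(t)-\sum_{k=1}^{\lfloor (l+1)/2\rfloor}w_{2k-1}t^{2k-1}\bigr)=R(t,l)$. Dividing through by $1-bt-t^{2}$ then produces \eqref{16}. I expect the only real obstacle to be keeping the parity cases of $l$ and the various index shifts mutually consistent; the surrounding algebra is routine manipulation of formal power series.
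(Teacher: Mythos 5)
Your proposal is correct and is essentially the paper's own argument written out in full: the paper's proof consists precisely of invoking the generating-function method of \cite[Theorem 1]{chaos1} together with the auxiliary recurrence $w_{2n-1}=(ab+2)w_{2n-3}-w_{2n-5}$, which are exactly the two ingredients you develop (the closed form for $f(t)$ and the computation of $(1-bt-t^{2})G(t)$ with the parity split producing the $(a-b)R(t,s)$ correction terms). All of your index bookkeeping checks out, including the identification of the error series with $R(t,l)$ in both parity cases of $l$.
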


\begin{proof}
By using the similar method in \cite[Theorem 1]{chaos1} and considering the
relation%
\begin{equation*}
w_{2n-1}=\left( ab+2\right) w_{2n-3}-w_{2n-5},
\end{equation*}%
we get the desired result.
\end{proof}

In the following theorem, we state the Binet formula for the generalized
bi-periodic Fibonacci quaternions and so derive some well-known mathematical
properties such as Catalan's like identity and Cassini's like identity.

\begin{theorem}
The Binet formula for the generalized bi-periodic Fibonacci quaternion is%
\begin{equation}
W_{n}=\left\{ 
\begin{array}{ll}
\frac{1}{\left( ab\right) ^{\left\lfloor \frac{n}{2}\right\rfloor }}\left(
A\alpha ^{\ast }\alpha ^{n-1}-B\beta ^{\ast }\beta ^{n-1}\right) , & 
\mbox{
if }n\mbox{ is even} \\ 
\frac{1}{\left( ab\right) ^{\left\lfloor \frac{n}{2}\right\rfloor }}\left(
A\alpha ^{\ast \ast }\alpha ^{n-1}-B\beta ^{\ast \ast }\beta ^{n-1}\right) ,
& \mbox{ if }n\mbox{ is odd}%
\end{array}%
\right.  \label{19}
\end{equation}%
where $A,B,\alpha ^{\ast },\beta ^{\ast },\alpha ^{\ast \ast },$ and $\beta
^{\ast \ast }$ defined in (\ref{6}) and (\ref{14}).
\end{theorem}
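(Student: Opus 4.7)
The plan is to derive the formula by substituting the scalar Binet formula (\ref{13}) for $w_{n}$ into the definition (\ref{15}) of $W_{n}$ and then to collect coefficients so as to recognize the sums defining $\alpha ^{\ast }$, $\beta ^{\ast }$, $\alpha ^{\ast \ast }$, $\beta ^{\ast \ast }$ in (\ref{6}). Thus one starts from
\begin{equation*}
W_{n} = \sum_{l=0}^{3} w_{n+l}\, e_{l} = \sum_{l=0}^{3} \frac{a^{\zeta (n+l+1)}}{(ab)^{\lfloor (n+l)/2 \rfloor}}\bigl(A\alpha ^{n+l-1} - B\beta ^{n+l-1}\bigr) e_{l},
\end{equation*}
and splits into the cases $n$ even and $n$ odd.

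If $n = 2m$, then $\zeta (n+l+1) = \zeta (l+1)$ and $\lfloor (n+l)/2 \rfloor = m + \lfloor l/2 \rfloor$. Pulling out the common factor $(ab)^{-n/2}$ and separating the $\alpha ^{n-1}$ and $\beta ^{n-1}$ terms, the remaining sums in $l$ are precisely $\alpha ^{\ast }$ and $\beta ^{\ast }$ from (\ref{6}), which yields the first line of (\ref{19}). If $n = 2m+1$, then $\zeta (n+l+1) = \zeta (l)$ and $\lfloor (n+l)/2 \rfloor = m + \lfloor (l+1)/2 \rfloor = \lfloor n/2 \rfloor + \lfloor (l+1)/2 \rfloor$. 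The same factoring now isolates the sums defining $\alpha ^{\ast \ast }$ and $\beta ^{\ast \ast }$, which yields the second line.

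The computation is essentially bookkeeping rather than an argument of substance; the only real care needed is in reducing the parity function $\zeta $ and the floor function evaluated at $n+l$ to expressions in $l$ alone, since the factors $A\alpha ^{n-1}$ and $B\beta ^{n-1}$ must be constants with respect to the summation index in order for $\alpha ^{\ast }, \beta ^{\ast }, \alpha ^{\ast \ast }, \beta ^{\ast \ast }$ of (\ref{6}) to emerge directly. Once those two identities are in place, the two cases of (\ref{19}) follow immediately.
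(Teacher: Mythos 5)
Your proposal is correct and follows exactly the route the paper takes: the paper's proof is a one-line appeal to substituting the scalar Binet formula (\ref{13}) into the definition (\ref{15}) and recognizing $\alpha^{\ast},\beta^{\ast},\alpha^{\ast\ast},\beta^{\ast\ast}$, which is precisely the bookkeeping you carry out (and your reductions $\zeta(n+l+1)=\zeta(l+1)$, $\lfloor(n+l)/2\rfloor=\lfloor n/2\rfloor+\lfloor l/2\rfloor$ for even $n$, and the analogous ones for odd $n$, are the right identities). In fact your write-up supplies more detail than the paper's own proof does.
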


\begin{proof}
By using the definition of the sequence $\{w_{n}\}$ and the Binet formula in
(\ref{13}), we can easily obtained the desired result.
\end{proof}

By using the Binet formula for the generalized bi-periodic Fibonacci
quaternion sequences, we obtain the following identity.

\begin{theorem}
\textit{(Catalan's like identity)} For nonnegative integer number $n$ and
even integer $r,$ such that $r\leq n,$ we have%
\begin{eqnarray}
&&W_{n-r}W_{n+r}-W_{n}^{2}  \notag \\
&=&\left\{ 
\begin{array}{ll}
\frac{AB\left( \alpha ^{r}-\beta ^{r}\right) }{\left( \alpha \beta \right)
^{r+1}}\left[ \alpha ^{\ast }\beta ^{\ast }\beta ^{r}-\beta ^{\ast }\alpha
^{\ast }\alpha ^{r}\right] , & \mbox{
if }n\mbox{ is even} \\ 
\frac{AB\left( \alpha ^{r}-\beta ^{r}\right) }{\left( \alpha \beta \right)
^{r}}\left[ \alpha ^{\ast \ast }\beta ^{\ast \ast }\beta ^{r}-\beta ^{\ast
\ast }\alpha ^{\ast \ast }\alpha ^{r}\right] , & \mbox{ if }n\mbox{ is odd}%
\end{array}%
\right. .  \label{20}
\end{eqnarray}
\end{theorem}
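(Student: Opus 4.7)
The plan is to apply the Binet formula (\ref{19}) to each of the three quaternions $W_{n-r}$, $W_n$, $W_{n+r}$ and carry out the algebra directly, keeping in mind that $\alpha^{\ast},\beta^{\ast}$ (and similarly $\alpha^{\ast\ast},\beta^{\ast\ast}$) are quaternions that do not commute with each other, while $A,B,\alpha,\beta$ are real scalars that commute with everything. Because $r$ is assumed to be even, the indices $n-r$, $n$, $n+r$ share the parity of $n$, so the same branch of (\ref{19}) applies to all three terms. I would handle the even-$n$ case first, then observe that the odd-$n$ case follows in an identical fashion with starred quantities replaced by double-starred ones.

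In the even-$n$ case, writing $W_m = (ab)^{-m/2}\bigl(A\alpha^{\ast}\alpha^{m-1}-B\beta^{\ast}\beta^{m-1}\bigr)$ for $m\in\{n-r,n,n+r\}$ and expanding, the diagonal $A^2(\alpha^{\ast})^2\alpha^{2n-2}$ and $B^2(\beta^{\ast})^2\beta^{2n-2}$ contributions appear identically in both $W_{n-r}W_{n+r}$ and $W_n^2$ (the prefactors $(ab)^n$ agree), so they cancel. The surviving cross terms are
\[
-\frac{AB}{(ab)^n}\Bigl[\alpha^{\ast}\beta^{\ast}\bigl(\alpha^{n-r-1}\beta^{n+r-1}-\alpha^{n-1}\beta^{n-1}\bigr)+\beta^{\ast}\alpha^{\ast}\bigl(\beta^{n-r-1}\alpha^{n+r-1}-\beta^{n-1}\alpha^{n-1}\bigr)\Bigr].
\]
Factoring $(\alpha\beta)^{n-r-1}$ out of each scalar difference gives $(\alpha\beta)^{n-r-1}\beta^r(\beta^r-\alpha^r)$ and $(\alpha\beta)^{n-r-1}\alpha^r(\alpha^r-\beta^r)$, respectively, so the whole expression collapses to
\[
\frac{AB(\alpha\beta)^{n-r-1}(\alpha^r-\beta^r)}{(ab)^n}\bigl[\alpha^{\ast}\beta^{\ast}\beta^r-\beta^{\ast}\alpha^{\ast}\alpha^r\bigr].
\]
Since $\alpha\beta=-ab$ and $n$ is even, $(ab)^n=(\alpha\beta)^n$, and the prefactor reduces to $AB(\alpha^r-\beta^r)/(\alpha\beta)^{r+1}$, matching the stated identity. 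For odd $n$ the Binet prefactors produce $(ab)^{-(n-1)}$ in place of $(ab)^{-n}$, and $(ab)^{n-1}=(\alpha\beta)^{n-1}$ since $n-1$ is even, which leaves the denominator $(\alpha\beta)^r$ as required.

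The main obstacle is the non-commutativity: because $\alpha^{\ast}\beta^{\ast}\neq\beta^{\ast}\alpha^{\ast}$, the order of the quaternionic factors must be preserved throughout the expansion, and the two cross-term groups must be kept strictly separate until the last step, where they assemble into the bracket $\alpha^{\ast}\beta^{\ast}\beta^r-\beta^{\ast}\alpha^{\ast}\alpha^r$. Everything else is scalar bookkeeping, with the parities of $n$ and $r$ used precisely to convert $(\alpha\beta)^{n-r-1}/(ab)^n$ (or $/(ab)^{n-1}$) into the clean power of $\alpha\beta$ on the right-hand side.
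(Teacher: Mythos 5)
Your proposal is correct and follows essentially the same route as the paper: expand all three terms via the Binet formula, observe that the even parity of $r$ keeps all indices in the same branch so the diagonal terms cancel, and collect the non-commuting cross terms into the bracket $\alpha^{\ast}\beta^{\ast}\beta^{r}-\beta^{\ast}\alpha^{\ast}\alpha^{r}$. The only cosmetic difference is that you factor $(\alpha\beta)^{n-r-1}$ out of the scalar differences directly, while the paper factors $(\alpha\beta)^{n-1}$ and works with the ratios $1-\beta^{r}/\alpha^{r}$ before clearing denominators; both yield the identical final reduction of the prefactor.
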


\begin{proof}
For even $n$, we have%
\begin{equation*}
W_{n-r}W_{n+r}-W_{n}^{2}\text{ \ \ \ \ \ \ \ \ \ \ \ \ \ \ \ \ \ \ \ \ \ \ \
\ \ \ \ \ \ \ \ \ \ \ \ \ \ \ \ \ \ \ \ \ \ \ \ \ \ \ \ \ \ \ \ \ \ \ \ \ \
\ \ \ \ \ \ \ \ \ \ \ \ \ \ \ \ \ \ \ \ \ \ \ }
\end{equation*}%
\begin{equation*}
=\frac{1}{\left( ab\right) ^{n}}\left( A\alpha ^{\ast }\alpha
^{n-r-1}-B\beta ^{\ast }\beta ^{n-r-1}\right) \left( A\alpha ^{\ast }\alpha
^{n+r-1}-B\beta ^{\ast }\beta ^{n+r-1}\right) \text{ \ \ \ \ \ \ \ \ \ \ \ \
\ \ \ \ \ \ \ \ \ \ \ \ \ \ \ \ \ \ }
\end{equation*}%
\begin{equation*}
-\frac{1}{\left( ab\right) ^{n}}\left( A\alpha ^{\ast }\alpha ^{n-1}-B\beta
^{\ast }\beta ^{n-1}\right) \left( A\alpha ^{\ast }\alpha ^{n-1}-B\beta
^{\ast }\beta ^{n-1}\right) \text{ \ \ \ \ \ \ \ \ \ \ \ \ \ \ \ \ \ \ \ \ \
\ }
\end{equation*}%
\begin{equation*}
=\frac{1}{\left( ab\right) ^{n}}\left[ AB\alpha ^{\ast }\beta ^{\ast }\left(
\alpha \beta \right) ^{n-1}\left( 1-\frac{\beta ^{r}}{\alpha ^{r}}\right)
+BA\beta ^{\ast }\alpha ^{\ast }\left( \alpha \beta \right) ^{n-1}\left( 1-%
\frac{\alpha ^{r}}{\beta ^{r}}\right) \right] \text{ \ \ \ \ \ \ \ \ \ \ \ \
\ \ \ \ \ \ \ \ \ }
\end{equation*}%
\begin{equation*}
=\frac{\left( \alpha \beta \right) ^{n-1}AB}{\left( ab\right) ^{n}}\left[
\alpha ^{\ast }\beta ^{\ast }\left( 1-\frac{\beta ^{r}}{\alpha ^{r}}\right)
+\beta ^{\ast }\alpha ^{\ast }\left( 1-\frac{\alpha ^{r}}{\beta ^{r}}\right) %
\right] \text{ \ \ \ \ \ \ \ \ \ \ \ \ \ \ \ \ \ \ \ \ \ \ \ \ \ \ \ \ \ \ \
\ \ \ \ \ \ \ \ \ \ \ \ \ \ \ \ \ \ \ }
\end{equation*}%
\begin{equation*}
=\frac{AB}{\left( -1\right) ^{n}\alpha \beta }\left[ \alpha ^{\ast }\beta
^{\ast }\left( \frac{\alpha ^{r}-\beta ^{r}}{\alpha ^{r}}\right) +\beta
^{\ast }\alpha ^{\ast }\left( \frac{\beta ^{r}-\alpha ^{r}}{\beta ^{r}}%
\right) \right] \text{ \ \ \ \ \ \ \ \ \ \ \ \ \ \ \ \ \ \ \ \ \ \ \ \ \ \ \
\ \ \ \ \ \ \ \ \ \ \ \ \ \ \ \ \ \ }
\end{equation*}%
\begin{equation*}
=\frac{AB}{\left( -1\right) ^{n}\left( \alpha \beta \right) ^{r+1}}\left[
\alpha ^{\ast }\beta ^{\ast }\beta ^{r}\left( \alpha ^{r}-\beta ^{r}\right)
+\beta ^{\ast }\alpha ^{\ast }\alpha ^{r}\left( \beta ^{r}-\alpha
^{r}\right) \right] \text{ \ \ \ \ \ \ \ \ \ \ \ \ \ \ \ \ \ \ \ \ \ \ \ \ \
\ \ \ \ \ \ \ \ \ \ \ \ \ \ \ \ \ \ \ \ \ \ \ \ \ \ \ \ \ \ \ \ \ \ }
\end{equation*}%
\begin{equation*}
=\frac{AB\left( \alpha ^{r}-\beta ^{r}\right) }{\left( \alpha \beta \right)
^{r+1}}\left[ \alpha ^{\ast }\beta ^{\ast }\beta ^{r}-\beta ^{\ast }\alpha
^{\ast }\alpha ^{r}\right] .\text{ \ \ \ \ \ \ \ \ \ \ \ \ \ \ \ \ \ \ \ \ \
\ \ \ \ \ \ \ \ \ \ \ \ \ \ \ \ \ \ \ \ \ \ \ \ \ \ \ \ \ \ \ }
\end{equation*}%
Similarly, it can be proven for odd $n.$
\end{proof}

If we take the initial conditions $w_{0}=0$ and $w_{1}=1$ in (\ref{20}), we
get the result in \cite[Theorem 5]{chaos1}, and if we take the initial
conditions $w_{0}=2$ and $w_{1}=b$ in (\ref{20}), we get the result in \cite[%
Theorem 5]{chaos2}. Also, it is clear that if we take $r=2$ in the above
theorem we obtain the following result.

\begin{corollary}
(Cassini's like identity) For nonnegative even integer number $n$, we have%
\begin{equation}
W_{n-2}W_{n+2}-W_{n}^{2}=\frac{AB\left( \alpha ^{2}-\beta ^{2}\right) }{%
\left( \alpha \beta \right) ^{3}}\left[ \alpha ^{\ast }\beta ^{\ast }\beta
^{2}-\beta ^{\ast }\alpha ^{\ast }\alpha ^{2}\right] .  \label{21}
\end{equation}
\end{corollary}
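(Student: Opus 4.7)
The plan is to obtain this identity as an immediate corollary of Theorem 3 by specializing the Catalan's like identity to $r=2$. First I would verify the hypotheses of that theorem: $r=2$ is an even integer, and for $n$ a nonnegative even integer with $n\geq 2$ we have $r\leq n$, placing us in the ``$n$ even'' branch of (\ref{20}). Substituting $r=2$ into that branch reproduces the right-hand side of (\ref{21}) verbatim, since the quantities $(\alpha^{2}-\beta^{2})$, $(\alpha\beta)^{3}$, $\alpha^{\ast}\beta^{\ast}\beta^{2}$, and $\beta^{\ast}\alpha^{\ast}\alpha^{2}$ all appear without further manipulation in the general formula. No algebraic simplification is needed beyond recognizing this substitution.

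There is essentially no obstacle to overcome, since the work has been carried out in the proof of Theorem 3. The only subtleties worth recording are the implicit lower bound $n\geq 2$ coming from $r\leq n$ (so the leftmost term $W_{n-2}$ is defined by the recurrence) and the parity restriction: the corollary pertains only to even $n$, matching the first branch of (\ref{20}); the analogous odd-$n$ identity would be obtained by instead specializing the second branch, replacing $\alpha^{\ast},\beta^{\ast}$ with $\alpha^{\ast\ast},\beta^{\ast\ast}$ and adjusting the power of $\alpha\beta$ accordingly. If a self-contained derivation were preferred, one could repeat the Binet-formula computation of Theorem 3 directly at $r=2$, using $\alpha\beta=-ab$ and the even-$n$ identity $\lfloor(n\pm2)/2\rfloor=\lfloor n/2\rfloor\pm1$ to align the $(ab)^{\lfloor\cdot\rfloor}$ prefactors, but this merely reproduces the general argument in a special case.
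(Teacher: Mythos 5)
Your proposal is correct and matches the paper exactly: the corollary is obtained by substituting $r=2$ into the even-$n$ branch of the Catalan-like identity (\ref{20}), with no further manipulation required. Your additional remarks on the implicit bound $n\geq 2$ and the parity restriction are sound but not needed for the argument.
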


If we take the initial conditions $w_{0}=0$ and $w_{1}=1$ in (\ref{21}), we
get the result in \cite[Theorem 3]{chaos1}, and if we take the initial
conditions $w_{0}=2$ and $w_{1}=b$ in (\ref{21}), we get the result in \cite[%
Theorem 3]{chaos2}.

To present the Cassini's like identity as a different manner, now we give a
matrix representation for the even indices terms of the generalized
bi-periodic Fibonacci quaternions.

\begin{theorem}
For $n\geq 1,$ we have%
\begin{equation}
\left[ 
\begin{array}{cc}
W_{2n} & W_{2\left( n-1\right) } \\ 
W_{2\left( n+1\right) } & W_{2n}%
\end{array}%
\right] =\left[ 
\begin{array}{cc}
W_{2} & W_{0} \\ 
W_{4} & W_{2}%
\end{array}%
\right] \left[ 
\begin{array}{cc}
ab+2 & 1 \\ 
-1 & 0%
\end{array}%
\right] ^{n-1}.  \label{22}
\end{equation}
\end{theorem}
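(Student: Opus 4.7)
The natural approach is a short induction on $n \ge 1$, but the induction rests on first establishing the three-term recurrence
\begin{equation*}
W_{2(n+1)} \;=\; (ab+2)\,W_{2n} \;-\; W_{2(n-1)}, \qquad n \ge 1,
\end{equation*}
for the even-indexed generalized bi-periodic Fibonacci quaternions. This is the essential step and the only place where any real work is needed. I would derive it by observing that $\alpha^{2}$ and $\beta^{2}$ are the two roots of $x^{2}-ab(ab+2)\,x+(ab)^{2}=0$, since $\alpha\beta=-ab$ gives $\alpha^{2}\beta^{2}=(ab)^{2}$ and $\alpha+\beta=ab$ gives $\alpha^{2}+\beta^{2}=(ab)^{2}+2ab=ab(ab+2)$. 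Applied to the scalar Binet formula \eqref{13}, this yields $w_{2n+2}=(ab+2)\,w_{2n}-w_{2n-2}$ for every $n\ge 1$ (an even-index analogue of the relation $w_{2n-1}=(ab+2)w_{2n-3}-w_{2n-5}$ already used in the proof of the generating function theorem). Then, since $W_{m}=\sum_{l=0}^{3}w_{m+l}e_{l}$ is $\mathbb{R}$-linear in the shifted indices, the same recurrence lifts termwise to the quaternions.

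With that recurrence in hand, I induct on $n$. For the base case $n=1$, the matrix on the right is the constant matrix times the identity, which reproduces the left-hand side trivially.

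For the inductive step, assume the identity holds for some $n\ge 1$ and multiply both sides on the right by $\bigl[\begin{smallmatrix} ab+2 & 1 \\ -1 & 0 \end{smallmatrix}\bigr]$. Expanding the product
\begin{equation*}
\left[\begin{array}{cc} W_{2n} & W_{2(n-1)} \\ W_{2(n+1)} & W_{2n} \end{array}\right]
\left[\begin{array}{cc} ab+2 & 1 \\ -1 & 0 \end{array}\right]
\;=\;
\left[\begin{array}{cc} (ab+2)W_{2n}-W_{2(n-1)} & W_{2n} \\ (ab+2)W_{2(n+1)}-W_{2n} & W_{2(n+1)} \end{array}\right],
\end{equation*}
the $(1,1)$ and $(2,1)$ entries collapse to $W_{2(n+1)}$ and $W_{2(n+2)}$ respectively by the recurrence established above. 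The resulting matrix is precisely
\begin{equation*}
\left[\begin{array}{cc} W_{2(n+1)} & W_{2n} \\ W_{2(n+2)} & W_{2(n+1)} \end{array}\right],
\end{equation*}
which is the claimed identity with $n$ replaced by $n+1$. This closes the induction. The whole argument is routine once the recurrence for even indices is in place; that verification is the only non-mechanical step and is where I would concentrate the writeup.
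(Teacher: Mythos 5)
Your proof is correct and follows essentially the same route as the paper: induction on $n$, right-multiplying by $\bigl[\begin{smallmatrix} ab+2 & 1 \\ -1 & 0 \end{smallmatrix}\bigr]$ and collapsing the entries via $W_{2(n+1)}=(ab+2)W_{2n}-W_{2(n-1)}$. The only difference is that you explicitly justify that recurrence (via $\alpha^{2},\beta^{2}$ being roots of $x^{2}-ab(ab+2)x+(ab)^{2}=0$ and termwise linearity, which implicitly needs the relation $w_{m+2}=(ab+2)w_{m}-w_{m-2}$ for both parities of $m$), whereas the paper uses it without comment.
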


\begin{proof}
We prove it by using induction on $n$. It is clear that the result is true
when $n=1$. Assume that it is true for any integer $m$ such that $1\leq
m\leq n$. Then by using induction assumption, we have%
\begin{equation*}
\left[ 
\begin{array}{cc}
W_{2} & W_{0} \\ 
W_{4} & W_{2}%
\end{array}%
\right] \left[ 
\begin{array}{cc}
ab+2 & 1 \\ 
-1 & 0%
\end{array}%
\right] ^{n}\text{\ \ \ \ \ \ \ \ \ \ \ \ \ \ \ \ \ \ \ \ \ \ \ \ \ \ \ \ \
\ \ \ \ \ \ \ \ \ \ \ \ \ \ \ \ \ \ \ \ \ \ \ \ \ \ }
\end{equation*}%
\begin{equation*}
=\left[ 
\begin{array}{cc}
W_{2} & W_{0} \\ 
W_{4} & W_{2}%
\end{array}%
\right] \left[ 
\begin{array}{cc}
ab+2 & 1 \\ 
-1 & 0%
\end{array}%
\right] ^{n-1}\left[ 
\begin{array}{cc}
ab+2 & 1 \\ 
-1 & 0%
\end{array}%
\right] \text{\ \ \ \ \ \ \ \ \ \ \ \ \ \ \ \ \ \ \ \ \ \ \ \ \ \ \ \ \ }
\end{equation*}%
\begin{equation*}
=\left[ 
\begin{array}{cc}
W_{2n} & W_{2\left( n-1\right) } \\ 
W_{2\left( n+1\right) } & W_{2n}%
\end{array}%
\right] \left[ 
\begin{array}{cc}
ab+2 & 1 \\ 
-1 & 0%
\end{array}%
\right] \text{ \ \ \ \ \ \ \ \ \ \ \ \ \ \ \ \ \ \ \ \ \ \ \ \ \ \ \ \ \ \ \
\ \ \ \ \ \ \ \ }
\end{equation*}%
\begin{equation*}
=\left[ 
\begin{array}{cc}
\left( ab+2\right) W_{2n}-W_{2\left( n-1\right) } & W_{2n} \\ 
\left( ab+2\right) W_{2\left( n+1\right) }-W_{2n} & W_{2\left( n+1\right) }%
\end{array}%
\right] =\left[ 
\begin{array}{cc}
W_{2\left( n+1\right) } & W_{2n} \\ 
W_{2\left( n+2\right) } & W_{2\left( n+1\right) }%
\end{array}%
\right] \text{ \ \ \ \ }
\end{equation*}%
which completes the proof.
\end{proof}

\begin{corollary}
For $n\geq 1,$ we have%
\begin{equation}
W_{2\left( n-1\right) }W_{2\left( n+1\right)
}-W_{2n}^{2}=W_{0}W_{4}-W_{2}^{2}.  \label{23}
\end{equation}
\end{corollary}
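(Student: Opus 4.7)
My plan is to deduce the identity as an immediate consequence of the matrix representation (\ref{22}) by taking a suitable ``determinant'' of both sides, thereby avoiding any further reference to the Binet formula.

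For a $2\times 2$ matrix $X=\begin{pmatrix} a & b \\ c & d \end{pmatrix}$ with quaternion entries, I define the \emph{left determinant} $\det_L(X):=ad-bc$. Because quaternion multiplication is non-commutative, $\det_L$ is not multiplicative on general quaternion matrices. However, a short expansion shows that whenever $M$ is a $2\times 2$ matrix with \emph{real} entries, one still has $\det_L(XM)=\det_L(X)\det(M)$: the would-be obstructions are cross terms of the form $ac(pq-qp)$ and $bd(rs-sr)$, which vanish because real scalars commute with quaternions, leaving exactly $(ad-bc)(ps-qr)$.

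With this in hand, I apply $\det_L$ to both sides of (\ref{22}). The auxiliary matrix $M:=\begin{pmatrix} ab+2 & 1 \\ -1 & 0 \end{pmatrix}$ has real entries and determinant $1$, so $\det(M^{n-1})=1$. Thus (\ref{22}) yields
\begin{equation*}
W_{2n}^{2}-W_{2(n-1)}W_{2(n+1)}\;=\;W_{2}^{2}-W_{0}W_{4},
\end{equation*}
which rearranges to the claimed identity. The main delicate point to verify is the multiplicativity of $\det_L$ against real matrices described above; once this is checked the rest is automatic. As a fall-back, one could instead argue by induction on $n$ using only the even-index recursion $W_{2(n+1)}=(ab+2)W_{2n}-W_{2(n-1)}$ implicit in (\ref{22}), but the determinant route is cleaner.
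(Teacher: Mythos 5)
Your proof is correct and follows essentially the route the paper intends: the corollary is stated as an immediate consequence of the matrix identity (\ref{22}), obtained by applying a determinant-type map to both sides. Your one nontrivial step --- that $\det_L(XM)=\det_L(X)\det(M)$ for a quaternion matrix $X$ and a \emph{real} matrix $M$, since the surviving cross terms factor as $(ps-qr)(ad-bc)$ --- checks out, and it supplies exactly the justification for non-commutativity that the paper leaves implicit.
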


By means of this formula we can state the Cassini's like identity for the
bi-periodic Fibonacci quaternions as:%
\begin{equation*}
Q_{2\left( n-1\right) }Q_{2\left( n+1\right)
}-Q_{2n}^{2}=Q_{0}Q_{4}-Q_{2}^{2},
\end{equation*}%
which is not given before.

Following result gives the relation between the generalized bi-periodic
Fibonacci quaternions $\left\{ W_{n}\right\} $ and the bi-periodic Fibonacci
quaternions $\left\{ Q_{n}\right\} $.

\begin{theorem}
For any natural number $n$, we have%
\begin{equation}
W_{2\left( n+1\right) }Q_{2n}-W_{2n}Q_{2\left( n+1\right) }=\frac{1}{ab}%
\left[ A\alpha ^{\ast }\beta ^{\ast }\beta -B\beta ^{\ast }\alpha ^{\ast
}\alpha \right] .  \label{24}
\end{equation}
\end{theorem}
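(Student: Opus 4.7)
The plan is to substitute the Binet formulas (\ref{19}) for the even case of $W_n$ and (\ref{4}) for the even case of $Q_n$. Since both $2n$ and $2(n+1)$ are even, I use
$$W_{2k}=\frac{1}{(ab)^{k}}\bigl(A\alpha^{\ast}\alpha^{2k-1}-B\beta^{\ast}\beta^{2k-1}\bigr),\qquad Q_{2k}=\frac{1}{(ab)^{k}}\cdot\frac{\alpha^{\ast}\alpha^{2k}-\beta^{\ast}\beta^{2k}}{\alpha-\beta},$$
so that both products $W_{2(n+1)}Q_{2n}$ and $W_{2n}Q_{2(n+1)}$ carry the common prefactor $\frac{1}{(ab)^{2n+1}(\alpha-\beta)}$.

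I would then expand each product into four summands. Since $A,B,a,b$ and the powers of $\alpha,\beta$ are real, they commute freely with the quaternion units, so the two diagonal terms $A(\alpha^{\ast})^{2}\alpha^{4n+1}$ and $B(\beta^{\ast})^{2}\beta^{4n+1}$ appear identically in both products and therefore cancel out in the difference. What remains are the four cross terms involving the quaternion products $\alpha^{\ast}\beta^{\ast}$ and $\beta^{\ast}\alpha^{\ast}$, which must be preserved in the order given by the original expansion.

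Collecting the cross terms produces
$$\frac{1}{(ab)^{2n+1}(\alpha-\beta)}\!\left[A\alpha^{\ast}\beta^{\ast}\alpha^{2n-1}\beta^{2n}(\beta^{2}-\alpha^{2})+B\beta^{\ast}\alpha^{\ast}\alpha^{2n}\beta^{2n-1}(\alpha^{2}-\beta^{2})\right]\!.$$
One factor of $\alpha-\beta$ cancels against the denominator, leaving $\alpha+\beta=ab$. Using $\alpha\beta=-ab$, the mixed powers collapse as $\alpha^{2n-1}\beta^{2n}=(\alpha\beta)^{2n-1}\beta=-(ab)^{2n-1}\beta$ and $\alpha^{2n}\beta^{2n-1}=(\alpha\beta)^{2n-1}\alpha=-(ab)^{2n-1}\alpha$; putting these together with the prefactor and the overall sign from $\beta^{2}-\alpha^{2}=-(\alpha^{2}-\beta^{2})$ yields exactly $\frac{1}{ab}\bigl[A\alpha^{\ast}\beta^{\ast}\beta-B\beta^{\ast}\alpha^{\ast}\alpha\bigr]$.

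The only real obstacle is bookkeeping: one must resist the temptation to swap $\alpha^{\ast}$ with $\beta^{\ast}$ at any stage, since these quaternions do not commute and the very point of the identity is that $\alpha^{\ast}\beta^{\ast}$ and $\beta^{\ast}\alpha^{\ast}$ persist as distinct quantities on the right-hand side. Aside from that, the computation is a routine application of $\alpha+\beta=ab$ and $\alpha\beta=-ab$, mirroring the style of the Catalan-like argument already used in Theorem~3.
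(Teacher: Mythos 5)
Your proposal is correct and follows exactly the route the paper intends: the paper's proof is a one-line appeal to the Binet formulas (\ref{19}) and (\ref{4}), and your expansion, cancellation of the diagonal terms, and reduction via $\alpha+\beta=ab$ and $\alpha\beta=-ab$ supply precisely the omitted details, with the noncommutativity of $\alpha^{\ast}$ and $\beta^{\ast}$ handled correctly.
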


\begin{proof}
By using the Binet formula for the generalized bi-periodic Fibonacci
quaternions and the bi-periodic Fibonacci quaternions, we can easily obtain
the desired result.
\end{proof}

The norm value of the generalized bi-periodic Fibonacci quaternions is
defined by 
\begin{equation*}
Nr\left( W_{n}\right) :=W_{n}\overline{W_{n}},
\end{equation*}%
where $\overline{W_{n}}$ $:=w_{n}e_{0}-w_{1}e_{1}-w_{2}e_{2}-w_{3}e_{3}$ is
the conjugate of the generalized bi-periodic Fibonacci quaternion. Thus we
have%
\begin{equation*}
Nr\left( W_{n}\right) =w_{n}^{2}+w_{n+1}^{2}+w_{n+2}^{2}+w_{n+3}^{2}.
\end{equation*}%
By using the definition of the norm value and the Binet formula of the
sequence $\left\{ w_{n}\right\} $, then by making some necessary
calculations, we obtain the following result.

\begin{theorem}
The norm value of the generalized bi-periodic Fibonacci quaternions can be
stated as%
\begin{equation}
Nr\left( W_{n}\right) =T\left( n\right) +T\left( n+1\right) ,  \label{n1}
\end{equation}%
where%
\begin{equation}
T\left( n\right) :=\frac{a^{2\zeta \left( n+1\right) }}{\left( ab\right)
^{n-\zeta \left( n\right) }\left( \alpha \beta \right) ^{2}\left( \alpha
-\beta \right) ^{2}}\left[ w_{1}^{2}X+2w_{0}w_{1}bY+w_{0}^{2}b^{2}Z\right] ,
\label{n2}
\end{equation}%
and%
\begin{eqnarray}
&&X:=\alpha ^{2n}\left( \alpha ^{4}+\left( \alpha \beta \right) ^{2}\right)
+\beta ^{2n}\left( \beta ^{4}+\left( \alpha \beta \right) ^{2}\right)
-4\left( \alpha \beta \right) ^{n+2},  \notag \\
&&Y:=\alpha ^{2n-1}\left( \alpha ^{4}+\left( \alpha \beta \right)
^{2}\right) +\beta ^{2n-1}\left( \beta ^{4}+\left( \alpha \beta \right)
^{2}\right) +2\left( \alpha \beta \right) ^{n+2},  \notag \\
&&Z:=\alpha ^{2n-2}\left( \alpha ^{4}+\left( \alpha \beta \right)
^{2}\right) +\beta ^{2n-2}\left( \beta ^{4}+\left( \alpha \beta \right)
^{2}\right) -4\left( \alpha \beta \right) ^{n+1}.  \label{n3}
\end{eqnarray}
\end{theorem}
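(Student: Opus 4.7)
The plan is to apply the Binet formula (\ref{13}) termwise to the expansion
$$Nr(W_n) = w_n^2 + w_{n+1}^2 + w_{n+2}^2 + w_{n+3}^2,$$
after regrouping the four squares by the parity of their index as
$(w_n^2 + w_{n+2}^2) + (w_{n+1}^2 + w_{n+3}^2).$
The key observation justifying this grouping is that within each pair the parity is constant, so the parity-sensitive factors $a^{\zeta(k+1)}$ and $(ab)^{\lfloor k/2\rfloor}$ appearing in (\ref{13}) move in lockstep and can be pulled out cleanly. I would then show $w_n^2 + w_{n+2}^2 = T(n)$; the symmetric statement $w_{n+1}^2 + w_{n+3}^2 = T(n+1)$ follows from the same computation with $n$ shifted to $n+1$, giving (\ref{n1}).

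For the first pair, I would substitute (\ref{13}) and expand
$(A\alpha^{k-1} - B\beta^{k-1})^2 = A^2\alpha^{2k-2} - 2AB(\alpha\beta)^{k-1} + B^2\beta^{2k-2}$
for $k \in \{n, n+2\}$, then plug in
$$A = \frac{\alpha w_1 + b w_0}{\alpha - \beta}, \qquad B = \frac{\beta w_1 + b w_0}{\alpha - \beta}$$
and collect the coefficients of $w_1^2$, $2bw_0 w_1$, and $b^2 w_0^2$. Since $\zeta(n+3) = \zeta(n+1)$ and $\lfloor (n+2)/2\rfloor = \lfloor n/2\rfloor + 1$, the prefactor of $w_{n+2}^2$ is that of $w_n^2$ divided by $(ab)^2$; using $(ab)^2 = (\alpha\beta)^2$, I can multiply the $w_n^2$ bracket through by $(\alpha\beta)^2$ so that the two brackets combine over a common prefactor $a^{2\zeta(n+1)}/\bigl[(ab)^{2\lfloor n/2\rfloor}(\alpha-\beta)^2(\alpha\beta)^2\bigr]$, which, thanks to the identity $2\lfloor n/2\rfloor = n - \zeta(n)$, is exactly the prefactor in (\ref{n2}).

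The three resulting sums then collapse to $X$, $Y$, $Z$ essentially by inspection: for the $w_1^2$ piece one gets
$\alpha^{2n}[(\alpha\beta)^2+\alpha^4] + \beta^{2n}[(\alpha\beta)^2+\beta^4] - 2(\alpha\beta)^{n+2} - 2(\alpha\beta)^{n+2},$
the last two summands merging into $-4(\alpha\beta)^{n+2}$, producing $X$; the $b^2 w_0^2$ piece reproduces $Z$ upon the shift $2n \mapsto 2n-2$; and the cross $2bw_0 w_1$ piece yields
$\alpha^{2n-1}[(\alpha\beta)^2+\alpha^4] + \beta^{2n-1}[(\alpha\beta)^2+\beta^4] - 2(\alpha+\beta)(\alpha\beta)^{n+1},$
where the identity $\alpha + \beta = ab = -\alpha\beta$ rewrites the last summand as $+2(\alpha\beta)^{n+2}$, matching the sign in $Y$.

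The main obstacle is not conceptual but bookkeeping: keeping track of how the parity factors $a^{\zeta(\cdot)}$ and $(ab)^{\lfloor \cdot/2 \rfloor}$ recombine across the shift by two, and noticing that the apparent sign mismatch between the $-4(\alpha\beta)^{\bullet}$ terms in $X$ and $Z$ versus the $+2(\alpha\beta)^{\bullet}$ term in $Y$ is precisely the fingerprint of the relation $\alpha+\beta = -\alpha\beta$ acting on the cross term $-2AB$. Beyond the Binet formula (\ref{13}) and the basic identities $\alpha\beta = -ab$ and $\alpha+\beta = ab$, no other tool is required.
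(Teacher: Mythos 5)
Your proof is correct and follows exactly the route the paper intends: the paper gives no details here, remarking only that the result follows ``by using the definition of the norm value and the Binet formula of the sequence $\left\{ w_{n}\right\} $, then by making some necessary calculations.'' Your parity pairing $w_{n}^{2}+w_{n+2}^{2}=T\left( n\right) $ and $w_{n+1}^{2}+w_{n+3}^{2}=T\left( n+1\right) $, the recombination of the prefactors via $2\left\lfloor n/2\right\rfloor =n-\zeta \left( n\right) $ and $\left( ab\right) ^{2}=\left( \alpha \beta \right) ^{2}$, and the sign of the cross term in $Y$ via $\alpha +\beta =-\alpha \beta $ all check out.
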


Note that, if we take the initial conditions $w_{0}=0,w_{1}=1$ and $a=b=2,$
we get the norm value of the Pell quaternions in \cite[Theorem 3.1]{szynal},
and if we take $a=b$ in $\{w_{n}\},$ we get the norm value of the Horadam
quaternion numbers in \cite{halici1} with the case of $q=1$.

Finally, we give some summation formulas for the generalized bi-periodic
Fibonacci quaternions.

\begin{theorem}
For $n\geq 1,$ we have%
\begin{eqnarray}
\text{ \ \ \ \ \ }\left( i\right) \sum_{r=0}^{n-1}W_{r} &=&\frac{%
W_{n}-W_{n-2}+W_{n+1}-W_{n-1}}{ab}  \notag \\
&&-\frac{A\alpha ^{\ast }\beta ^{2}-B\beta ^{\ast }\alpha ^{2}-ab\left(
A\alpha ^{\ast \ast }\beta -B\beta ^{\ast \ast }\alpha \right) }{\left(
ab\right) ^{2}},  \label{25}
\end{eqnarray}%
\begin{equation}
\left( ii\right) \sum_{r=0}^{n-1}W_{2r}=\frac{W_{2n}-W_{2n-2}}{ab}-\frac{%
A\alpha ^{\ast }\beta ^{2}-B\beta ^{\ast }\alpha ^{2}}{\left( ab\right) ^{2}}%
,\text{ \ \ \ \ \ }  \label{26}
\end{equation}%
\begin{equation}
\left( iii\right) \sum_{r=0}^{n-1}W_{2r+1}=\frac{W_{2n+1}-W_{2n-1}}{ab}+%
\frac{A\alpha ^{\ast \ast }\beta -B\beta ^{\ast \ast }\alpha }{ab}.
\label{27}
\end{equation}
\end{theorem}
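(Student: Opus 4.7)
The plan is to derive (ii) and (iii) by direct application of the Binet formula (\ref{19}), reducing each sum to two geometric series in $\alpha$ and $\beta$, and then to obtain (i) by splitting $\sum_{r=0}^{n-1}W_r$ into its even-indexed and odd-indexed subsums and invoking (ii) and (iii).

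For part (ii), I would substitute $W_{2r}=(ab)^{-r}(A\alpha^{\ast}\alpha^{2r-1}-B\beta^{\ast}\beta^{2r-1})$ into the sum and factor, obtaining
\[
\sum_{r=0}^{n-1}W_{2r}=\frac{A\alpha^{\ast}}{\alpha}\sum_{r=0}^{n-1}\!\left(\frac{\alpha^{2}}{ab}\right)^{r}-\frac{B\beta^{\ast}}{\beta}\sum_{r=0}^{n-1}\!\left(\frac{\beta^{2}}{ab}\right)^{r}.
\]
The key algebraic input is that $\alpha$ and $\beta$ are the roots of $x^{2}-ab\,x-ab=0$, so $\alpha\beta=-ab$ and $\alpha^{2}-ab=ab\,\alpha$, $\beta^{2}-ab=ab\,\beta$. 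This makes the denominators $\alpha^{2}/(ab)-1=\alpha$ and $\beta^{2}/(ab)-1=\beta$ collapse cleanly, and after collecting terms one recognises the expression $(A\alpha^{\ast}\alpha^{2n-2}-B\beta^{\ast}\beta^{2n-2})/(ab)^{n}$, which equals $(W_{2n}-W_{2n-2})/(ab)$ once one uses $\alpha^{2}-ab=ab\,\alpha$ and $\beta^{2}-ab=ab\,\beta$ inside the Binet formula. The leftover constant piece $-A\alpha^{\ast}/\alpha^{2}+B\beta^{\ast}/\beta^{2}$ is then converted via $(\alpha\beta)^{2}=(ab)^{2}$ into $-(A\alpha^{\ast}\beta^{2}-B\beta^{\ast}\alpha^{2})/(ab)^{2}$, matching the stated right-hand side of (\ref{26}). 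Part (iii) proceeds identically using the odd-index Binet formula $W_{2r+1}=(ab)^{-r}(A\alpha^{\ast\ast}\alpha^{2r}-B\beta^{\ast\ast}\beta^{2r})$; here the absence of the $1/\alpha$, $1/\beta$ prefactors causes the constant term to emerge as $+(A\alpha^{\ast\ast}\beta-B\beta^{\ast\ast}\alpha)/(ab)$ after applying $\alpha^{-1}=-\beta/(ab)$ and $\beta^{-1}=-\alpha/(ab)$.

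For part (i), I would split according to the parity of $n$. If $n=2k$, then $\sum_{r=0}^{n-1}W_{r}=\sum_{r=0}^{k-1}W_{2r}+\sum_{r=0}^{k-1}W_{2r+1}$, and if $n=2k+1$, then $\sum_{r=0}^{n-1}W_{r}=\sum_{r=0}^{k}W_{2r}+\sum_{r=0}^{k-1}W_{2r+1}$. In both cases, substituting (\ref{26}) and (\ref{27}) with the appropriate upper limits and relabelling the resulting $W$-terms as $W_{n},W_{n-1},W_{n+1},W_{n-2}$ produces exactly the right-hand side of (\ref{25}); a brief check shows that the two parity cases assemble the four $W$-terms in the same way, so no case distinction survives in the final formula.

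The main obstacle is the bookkeeping in (ii): one must recognise that the output of geometric summation can be reassembled into $(W_{2n}-W_{2n-2})/(ab)$ plus a constant, which requires simultaneous use of the characteristic equation for $\alpha,\beta$ and of $\alpha\beta=-ab$. Once that telescoping-like pattern is isolated, everything else is routine computation.
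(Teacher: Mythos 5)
Your proposal is correct and follows essentially the same route as the paper: both reduce the sums via the Binet formula to geometric series in $\alpha^{2}/(ab)$ and $\beta^{2}/(ab)$ and simplify using $\alpha^{2}-ab=ab\,\alpha$, $\beta^{2}-ab=ab\,\beta$ and $\alpha\beta=-ab$. The only difference is organizational --- the paper proves $(i)$ directly by splitting into even- and odd-indexed subsums and computing each inline, whereas you establish $(ii)$ and $(iii)$ first and then assemble $(i)$ from them --- which is the same computation in a slightly cleaner packaging.
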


\begin{proof}
$\left( i\right) $If $n$ is odd,%
\begin{equation*}
\sum_{r=0}^{n-1}W_{r}=\sum_{r=0}^{\frac{n-1}{2}}W_{2r}+\sum_{r=0}^{\frac{n-3%
}{2}}W_{2r+1}\text{ \ \ \ \ \ \ \ \ \ \ \ \ \ \ \ \ \ \ \ \ \ \ \ \ \ \ \ \
\ \ \ \ \ \ \ \ \ \ \ \ \ \ \ \ \ \ \ \ \ \ \ \ \ \ \ \ \ \ \ \ \ \ \ \ \ \ }
\end{equation*}%
\begin{equation*}
=\sum_{r=0}^{\frac{n-1}{2}}\frac{1}{\left( ab\right) ^{r}}\left( A\alpha
^{\ast }\alpha ^{2r-1}-B\beta ^{\ast }\beta ^{2r-1}\right) +\sum_{r=0}^{%
\frac{n-3}{2}}\frac{1}{\left( ab\right) ^{r}}\left( A\alpha ^{\ast \ast
}\alpha ^{2r}-B\beta ^{\ast \ast }\beta ^{2r}\right) \text{ \ \ \ \ \ \ \ \
\ \ \ \ \ \ \ \ }
\end{equation*}%
\begin{equation*}
=A\alpha ^{\ast }\alpha ^{-1}\sum_{r=0}^{\frac{n-1}{2}}\left( \frac{\alpha
^{2}}{ab}\right) ^{r}-B\beta ^{\ast }\beta ^{-1}\sum_{r=0}^{\frac{n-1}{2}%
}\left( \frac{\beta ^{2}}{ab}\right) ^{r}\text{ \ \ \ \ \ \ \ \ \ \ \ \ \ \
\ \ \ \ \ \ \ \ \ \ \ \ \ \ \ \ \ \ \ \ \ \ \ \ \ \ \ \ \ \ \ \ \ \ \ \ \ \
\ \ \ \ \ \ }
\end{equation*}%
\begin{equation*}
+A\alpha ^{\ast \ast }\sum_{r=0}^{\frac{n-3}{2}}\left( \frac{\alpha ^{2}}{ab}%
\right) ^{r}-B\beta ^{\ast \ast }\sum_{r=0}^{\frac{n-3}{2}}\left( \frac{%
\beta ^{2}}{ab}\right) ^{r}\text{ \ \ \ \ \ \ \ \ \ \ \ \ \ \ \ \ \ \ \ \ \
\ \ \ \ \ \ \ \ \ \ \ \ \ \ \ \ \ \ \ \ \ \ \ \ \ \ \ \ \ \ \ \ \ \ \ }
\end{equation*}%
\begin{equation*}
=A\alpha ^{\ast }\alpha ^{-1}\frac{\left( \frac{\alpha ^{2}}{ab}\right) ^{%
\frac{n-1}{2}+1}-1}{\frac{\alpha ^{2}}{ab}-1}-B\beta ^{\ast }\beta ^{-1}%
\frac{\left( \frac{\beta ^{2}}{ab}\right) ^{\frac{n-1}{2}+1}-1}{\frac{\beta
^{2}}{ab}-1}\text{ \ \ \ \ \ \ \ \ \ \ \ \ \ \ \ \ \ \ \ \ \ \ \ \ \ \ \ \ \
\ \ \ \ \ \ \ \ \ \ \ \ \ \ \ }
\end{equation*}%
\begin{equation*}
+A\alpha ^{\ast \ast }\frac{\left( \frac{\alpha ^{2}}{ab}\right) ^{\frac{n-3%
}{2}+1}-1}{\frac{\alpha ^{2}}{ab}-1}-B\beta ^{\ast \ast }\frac{\left( \frac{%
\beta ^{2}}{ab}\right) ^{\frac{n-3}{2}+1}-1}{\frac{\beta ^{2}}{ab}-1}\text{
\ \ \ \ \ \ \ \ \ \ \ \ \ \ \ \ \ \ \ \ \ \ \ \ \ \ \ \ \ \ \ \ \ \ \ \ \ \
\ \ \ }
\end{equation*}%
\begin{equation*}
=A\alpha ^{\ast }\alpha ^{-1}\frac{\alpha ^{n+1}-\left( ab\right) ^{\frac{n+1%
}{2}}}{\left( \alpha ^{2}-ab\right) \left( ab\right) ^{\frac{n-1}{2}}}%
-B\beta ^{\ast }\beta ^{-1}\frac{\beta ^{n+1}-\left( ab\right) ^{\frac{n+1}{2%
}}}{\left( \beta ^{2}-ab\right) \left( ab\right) ^{\frac{n-1}{2}}}\text{ \ \
\ \ \ \ \ \ \ \ \ \ \ \ \ \ \ \ \ \ \ \ \ \ \ \ \ \ \ \ \ \ \ \ \ \ \ }
\end{equation*}%
\begin{equation*}
+A\alpha ^{\ast \ast }\frac{\alpha ^{n-1}-\left( ab\right) ^{\frac{n-1}{2}}}{%
\left( \alpha ^{2}-ab\right) \left( ab\right) ^{\frac{n-3}{2}}}-B\beta
^{\ast \ast }\frac{\beta ^{n-1}-\left( ab\right) ^{\frac{n-1}{2}}}{\left(
\beta ^{2}-ab\right) \left( ab\right) ^{\frac{n-3}{2}}}\text{ \ \ \ \ \ \ \
\ \ \ \ \ \ \ \ \ \ \ \ \ \ \ \ \ \ \ \ \ \ \ \ \ \ \ \ }
\end{equation*}%
\begin{equation*}
=A\alpha ^{\ast }\frac{\alpha ^{n+1}-\left( ab\right) ^{\frac{n+1}{2}}}{%
\alpha ^{2}\left( ab\right) ^{\frac{n+1}{2}}}-B\beta ^{\ast }\frac{\beta
^{n+1}-\left( ab\right) ^{\frac{n+1}{2}}}{\beta ^{2}\left( ab\right) ^{\frac{%
n+1}{2}}}\text{ \ \ \ \ \ \ \ \ \ \ \ \ \ \ \ \ \ \ \ \ \ \ \ \ \ \ \ \ \ \
\ \ \ \ \ \ \ \ \ \ \ \ \ \ \ \ \ \ \ \ \ \ \ \ \ \ \ }
\end{equation*}%
\begin{equation*}
+A\alpha ^{\ast \ast }\frac{\alpha ^{n-1}-\left( ab\right) ^{\frac{n-1}{2}}}{%
\alpha \left( ab\right) ^{\frac{n-1}{2}}}-B\beta ^{\ast \ast }\frac{\beta
^{n-1}-\left( ab\right) ^{\frac{n-1}{2}}}{\beta \left( ab\right) ^{\frac{n-1%
}{2}}}\text{ \ \ \ \ \ \ \ \ \ \ \ \ \ \ \ \ \ \ \ \ \ \ \ \ \ \ \ \ \ \ \ \
\ \ \ \ \ \ \ \ \ \ \ \ }
\end{equation*}%
\begin{equation*}
=\frac{1}{\left( ab\right) ^{\frac{n+1}{2}}}\left( A\alpha ^{\ast }\frac{%
\alpha ^{n+1}-\left( ab\right) ^{\frac{n+1}{2}}}{\alpha ^{2}}-B\beta ^{\ast }%
\frac{\beta ^{n+1}-\left( ab\right) ^{\frac{n+1}{2}}}{\beta ^{2}}\right) 
\text{ \ \ \ \ \ \ \ \ \ \ \ \ \ \ \ \ \ \ \ \ \ \ \ \ \ \ \ \ \ \ \ \ \ \ \
\ }
\end{equation*}%
\begin{equation*}
+\frac{1}{\left( ab\right) ^{\frac{n-1}{2}}}\left( A\alpha ^{\ast \ast }%
\frac{\alpha ^{n-1}-\left( ab\right) ^{\frac{n-1}{2}}}{\alpha }-B\beta
^{\ast \ast }\frac{\beta ^{n-1}-\left( ab\right) ^{\frac{n-1}{2}}}{\beta }%
\right) \text{ \ \ \ \ \ \ \ \ \ \ \ \ \ \ \ \ \ \ \ \ \ \ \ }
\end{equation*}%
\begin{equation*}
=\frac{1}{\left( ab\right) ^{\frac{n+1}{2}}}\left( A\alpha ^{\ast }\alpha
^{n-1}-B\beta ^{\ast }\beta ^{n-1}-\frac{A\alpha ^{\ast }\left( ab\right) ^{%
\frac{n+1}{2}}}{\alpha ^{2}}-\frac{B\beta ^{\ast }\left( ab\right) ^{\frac{%
n+1}{2}}}{\beta ^{2}}\right) \text{ \ \ \ \ \ \ \ \ \ \ \ \ \ \ \ \ \ \ \ \
\ \ }
\end{equation*}%
\begin{equation*}
+\frac{1}{\left( ab\right) ^{\frac{n-1}{2}}}\left( A\alpha ^{\ast \ast
}\alpha ^{n-2}-B\beta ^{\ast \ast }\beta ^{n-2}-\frac{A\alpha ^{\ast \ast
}\left( ab\right) ^{\frac{n-1}{2}}}{\alpha }+\frac{B\beta ^{\ast \ast
}\left( ab\right) ^{\frac{n-1}{2}}}{\beta }\right) \text{ \ \ \ \ \ \ }
\end{equation*}%
\begin{equation*}
=\frac{1}{\left( ab\right) ^{\frac{n+1}{2}}}\left( A\alpha ^{\ast }\alpha
^{n-1}-B\beta ^{\ast }\beta ^{n-1}\right) -\left( \frac{A\alpha ^{\ast }}{%
\alpha ^{2}}-\frac{B\beta ^{\ast }}{\beta ^{2}}\right) \text{ \ \ \ \ \ \ \
\ \ \ \ \ \ \ \ \ \ \ \ \ \ \ \ \ \ \ \ \ \ \ \ \ \ \ \ \ \ \ \ \ \ \ \ \ }
\end{equation*}%
\begin{equation*}
+\frac{1}{\left( ab\right) ^{\frac{n-1}{2}}}\left( A\alpha ^{\ast \ast
}\alpha ^{n-2}-B\beta ^{\ast \ast }\beta ^{n-2}\right) -\left( \frac{A\alpha
^{\ast \ast }}{\alpha }-\frac{B\beta ^{\ast \ast }}{\beta }\right) \text{ \
\ \ \ \ \ \ \ \ \ \ \ \ \ \ \ \ \ \ \ \ \ \ \ \ \ \ \ }
\end{equation*}%
\begin{equation*}
=\frac{W_{n+1}-W_{n-2}+W_{n}-W_{n-1}}{ab}-\frac{A\alpha ^{\ast }\beta
^{2}-B\beta ^{\ast }\alpha ^{2}-ab\left( A\alpha ^{\ast \ast }\beta -B\beta
^{\ast \ast }\alpha \right) }{\left( ab\right) ^{2}}.\text{\ \ \ \ \ \ \ \ \
\ }
\end{equation*}

Similarly, it can be proven for even $n$. Also, the same procedure can be
applied for $\left( ii\right) $ and\ $\left( iii\right) .$
\end{proof}

Note that, if we take the initial conditions $w_{0}=0$ and $w_{1}=1$ in the
above theorem, we get the summation formulas for the bi-periodic Fibonacci
quaternions, and by taking the initial conditions $w_{0}=2$ and $w_{1}=b,$
we get the summation formulas for the bi-periodic Lucas quaternions which
are not given before.

\section{The generalized bi-periodic Fibonacci octonions}

In this section, we introduce the generalized bi-periodic Fibonacci
octonions and give some basic properties of them. These results can be seen
as a generalization of the papers in \cite{yilmaz1} and \cite{yilmaz2}. Most
of the results can be obtained analogously to the results for the
generalized bi-periodic Fibonacci quaternions, so we omit some proofs.

\begin{definition}
The generalized bi-periodic Fibonacci octonions $\{OW_{n}\}$ are defined by%
\begin{equation}
OW_{n}=\sum_{l=0}^{7}w_{n+l}e_{l},  \label{28}
\end{equation}%
where $w_{n}$ is defined in (\ref{13}).
\end{definition}

Note that, if we take the initial conditions $w_{0}=0$ and $w_{1}=1,$ we get
the bi-periodic Fibonacci octonions in \cite{yilmaz1}. If we take the
initial conditions $w_{0}=2$ and $w_{1}=b,$ we get the bi-periodic Lucas
octonions in \cite{yilmaz2}. Also, if we take $a=b=1$ in $\{w_{n}\},$ we get
the Horadam octonion numbers in \cite{halici2} with the case of $q=1$.

\begin{theorem}
The generating function for the generalized bi-periodic Fibonacci octonion $%
OW_{n}$ is%
\begin{equation}
G^{^{\prime }}\left( t\right) =\frac{OW_{0}+\left( OW_{1}-bOW_{0}\right)
t+\left( a-b\right) \sum_{s=0}^{7}R^{^{\prime }}\left( t,s\right) e_{s}}{%
1-bt-t^{2}}  \label{29}
\end{equation}%
where%
\begin{equation}
R^{^{\prime }}\left( t,s\right) :=\left( f\left( t\right)
-\sum_{k=1}^{\left\lfloor \frac{s+1}{2}\right\rfloor
}w_{2k-1}t^{2k-1}\right) t^{1-s}\text{ \ \ \ \ \ \ \ \ \ \ \ \ \ \ \ \ \ \ \
\ }  \label{30}
\end{equation}%
and $f\left( t\right) $ is defined in (\ref{18}).
\end{theorem}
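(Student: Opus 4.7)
The plan is to mirror the proof of Theorem~2 essentially verbatim, with the index $l$ running from $0$ to $7$ instead of $0$ to $3$, exploiting the fact that the relation $w_{2n-1}=(ab+2)w_{2n-3}-w_{2n-5}$ depends only on the scalar sequence $\{w_n\}$ and not on the hypercomplex basis. First I would set $G'(t)=\sum_{n\ge 0} OW_n t^n$ and compute $(1-bt-t^2)G'(t)$. Reading off coefficients, the coefficient of $t^n e_l$ for $n\ge 2$ becomes $w_{n+l}-bw_{n-1+l}-w_{n-2+l}$, which by the definition in (\ref{11}) equals $0$ when $n+l$ is odd and equals $(a-b)w_{n-1+l}$ when $n+l$ is even; the $n=0,1$ contributions produce the prefix $OW_0+(OW_1-bOW_0)t$.

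Next I would collect the remaining parity-obstructed terms. For fixed $l\in\{0,1,\dots,7\}$ the contribution of $e_l$ is
\begin{equation*}
(a-b)\sum_{\substack{n\ge 2 \\ n+l \text{ even}}} w_{n-1+l}\,t^n,
\end{equation*}
and the substitution $m=n-1+l$ turns this into a sum over odd values of $m$, namely
\begin{equation*}
(a-b)\,t^{1-l}\sum_{m \text{ odd},\, m\ge l+1} w_m t^m
=(a-b)\left(f(t)-\sum_{k=1}^{\lfloor (s+1)/2\rfloor} w_{2k-1}t^{2k-1}\right)t^{1-s}\bigg|_{s=l},
\end{equation*}
which is exactly $(a-b)R'(t,s)e_s$ after relabeling. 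Summing over $s=0,1,\dots,7$ produces the octonionic correction term in the numerator of (\ref{29}).

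The last ingredient is the closed form (\ref{18}) for $f(t)$. Since the odd-indexed subsequence $\{w_{2n-1}\}$ satisfies the linear recurrence $w_{2n-1}=(ab+2)w_{2n-3}-w_{2n-5}$ with initial values $w_1$ and $w_3=abw_1+bw_0$, one obtains $f(t)=(w_1 t+(bw_0-w_1)t^3)/(1-(ab+2)t^2+t^4)$ by standard manipulation of its generating function. Dividing through by $1-bt-t^2$ gives (\ref{29}).

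The only real obstacle is the bookkeeping: verifying that the index substitution $m=n-1+l$ combined with the truncation of the odd-indexed sum yields exactly the claimed expression $R'(t,s)$ for every $s\in\{0,\dots,7\}$, including the boundary cases $s=0$ and $s=7$. Since this computation is parallel to the quaternion case (where it works for $s\in\{0,1,2,3\}$) and the extra values of $s$ introduce no new phenomena — only more terms to truncate from $f(t)$ — no new idea is required beyond the one already used in Theorem~2.
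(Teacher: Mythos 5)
Your proposal is correct and follows essentially the same route as the paper, which omits the octonion proof entirely and refers back to the quaternion case (itself justified only by appeal to the method of the earlier bi-periodic Fibonacci quaternion paper together with the relation $w_{2n-1}=(ab+2)w_{2n-3}-w_{2n-5}$), so your write-up in fact supplies more detail than the paper does. One small slip: the initial value is $w_{3}=abw_{1}+bw_{0}+w_{1}$, not $abw_{1}+bw_{0}$, though this does not affect your correctly stated closed form for $f(t)$ in (\ref{18}).
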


Note that, if we take the initial conditions $w_{0}=0$ and $w_{1}=1,$ we get
the generating function of the bi-periodic Fibonacci octonions in \cite[%
Theorem 2.4]{yilmaz1}.

\begin{theorem}
The Binet formula for the generalized bi-periodic Fibonacci octonion is%
\begin{equation}
OW_{n}=\left\{ 
\begin{array}{ll}
\frac{1}{\left( ab\right) ^{\left\lfloor \frac{n}{2}\right\rfloor }}\left(
A\gamma ^{\ast }\alpha ^{n-1}-B\delta ^{\ast }\beta ^{n-1}\right) , & 
\mbox{
if }n\mbox{ is even} \\ 
\frac{1}{\left( ab\right) ^{\left\lfloor \frac{n}{2}\right\rfloor }}\left(
A\gamma ^{\ast \ast }\alpha ^{n-1}-B\delta ^{\ast \ast }\beta ^{n-1}\right) ,
& \mbox{ if }n\mbox{ is odd}%
\end{array}%
\right.  \label{31}
\end{equation}%
where $A,B,\gamma ^{\ast },\delta ^{\ast },\gamma ^{\ast \ast },$ and $%
\delta ^{\ast \ast }$ defined in (\ref{10})and (\ref{14}).
\end{theorem}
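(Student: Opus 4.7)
The plan is to mimic the proof of the quaternion analogue (Theorem for $W_n$) by substituting the scalar Binet formula (\ref{13}) directly into the defining sum (\ref{28}) and then splitting the eight-term sum according to the parity of $n$. Concretely, I would start from
\[
OW_{n}=\sum_{l=0}^{7}w_{n+l}e_{l}=\sum_{l=0}^{7}\frac{a^{\zeta(n+l+1)}}{(ab)^{\lfloor(n+l)/2\rfloor}}\bigl(A\alpha^{n+l-1}-B\beta^{n+l-1}\bigr)e_{l},
\]
and then reduce the parity-dependent exponents $\zeta(n+l+1)$ and $\lfloor(n+l)/2\rfloor$ in two separate cases.

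When $n$ is even, the key elementary identities are $\zeta(n+l+1)=\zeta(l+1)$ and $\lfloor(n+l)/2\rfloor=n/2+\lfloor l/2\rfloor$. Pulling the factor $(ab)^{-n/2}$ and the $\alpha$-- and $\beta$--independent powers out, one would factor the sum as
\[
OW_{n}=\frac{1}{(ab)^{n/2}}\left[A\alpha^{n-1}\sum_{l=0}^{7}\frac{a^{\zeta(l+1)}}{(ab)^{\lfloor l/2\rfloor}}\alpha^{l}e_{l}-B\beta^{n-1}\sum_{l=0}^{7}\frac{a^{\zeta(l+1)}}{(ab)^{\lfloor l/2\rfloor}}\beta^{l}e_{l}\right],
\]
and recognize the two inner sums as $\gamma^{\ast}$ and $\delta^{\ast}$ from (\ref{10}). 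This yields the even-$n$ half of (\ref{31}).

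When $n$ is odd, the analogous reductions are $\zeta(n+l+1)=\zeta(l)$ and $\lfloor(n+l)/2\rfloor=\lfloor n/2\rfloor+\lfloor(l+1)/2\rfloor$ (checked by splitting on the parity of $l$). After the same factoring step the inner sums become exactly $\gamma^{\ast\ast}$ and $\delta^{\ast\ast}$, giving the odd-$n$ half of (\ref{31}).

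The computation is essentially bookkeeping; there is no genuine obstacle, since both the quaternion case and the associativity of scalar multiplication by $e_l$ in $\mathbf{O}$ (scalar coefficients always commute and associate with the octonion basis) make the manipulation valid. The only point requiring care is the parity arithmetic for $\zeta$ and the floor function when $n$ and $l$ mix, so I would verify those two identities at the outset and then the rest is an immediate rearrangement parallel to the quaternion proof.
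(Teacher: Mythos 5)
Your proposal is correct and matches the paper's (omitted, but explicitly indicated) argument: the paper derives the quaternion Binet formula by substituting the scalar Binet formula (\ref{13}) into the defining sum and states that the octonion case is obtained analogously, which is exactly your substitution-and-parity-splitting computation. The two parity identities you single out, $\zeta(n+l+1)=\zeta(l+1)$, $\lfloor(n+l)/2\rfloor=n/2+\lfloor l/2\rfloor$ for even $n$ and $\zeta(n+l+1)=\zeta(l)$, $\lfloor(n+l)/2\rfloor=\lfloor n/2\rfloor+\lfloor(l+1)/2\rfloor$ for odd $n$, are the correct bookkeeping steps and the inner sums do reduce to $\gamma^{\ast},\delta^{\ast},\gamma^{\ast\ast},\delta^{\ast\ast}$ as defined in (\ref{10}).
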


\begin{theorem}
\textit{(Catalan's like identity)} For nonnegative integer number $n$ and
odd integer $r,$ such that $r\leq n,$ we have%
\begin{eqnarray}
&&OW_{n-r}OW_{n+r}-OW_{n}^{2}  \notag \\
&=&\left\{ 
\begin{array}{ll}
\frac{AB\left( \alpha ^{r}-\beta ^{r}\right) }{\left( \alpha \beta \right)
^{r+1}}\left[ \gamma ^{\ast }\delta ^{\ast }\beta ^{r}-\delta ^{\ast }\gamma
^{\ast }\alpha ^{r}\right] , & \mbox{
if }n\mbox{ is even} \\ 
\frac{AB\left( \alpha ^{r}-\beta ^{r}\right) }{\left( \alpha \beta \right)
^{r}}\left[ \gamma ^{\ast \ast }\delta ^{\ast \ast }\beta ^{r}-\delta ^{\ast
\ast }\gamma ^{\ast \ast }\alpha ^{r}\right] , & \mbox{ if }n\mbox{ is odd}%
\end{array}%
\right. .  \label{32}
\end{eqnarray}
\end{theorem}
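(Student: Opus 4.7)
The plan is to mimic exactly the strategy used for the quaternion Catalan identity (equation \ref{20}): substitute the Binet formula from equation \ref{31} into each of $OW_{n-r}$, $OW_{n+r}$, and $OW_n$, expand the two products, and watch the pure squared terms cancel so that only a cross-term contribution survives. For the parity choice in the hypothesis, the three indices $n-r$, $n+r$, $n$ fall under the same branch of the Binet formula, so all three expansions use the same octonion constants $\gamma^{\ast},\delta^{\ast}$ (when $n$ is even) or $\gamma^{\ast\ast},\delta^{\ast\ast}$ (when $n$ is odd), and the powers of $(ab)$ in the denominators add up in a compatible way.

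For concreteness, consider the even-$n$ case. After substituting the formula, I would compute $OW_{n-r}OW_{n+r}$ and $OW_n^2$ with a common denominator $(ab)^n$. Expanding each yields four terms; the pure contributions in $\gamma^{\ast}\gamma^{\ast}\alpha^{2n-2}$ and $\delta^{\ast}\delta^{\ast}\beta^{2n-2}$ match identically in both expressions and cancel in the difference. What remains is the cross-term contribution
\[
OW_{n-r}OW_{n+r} - OW_{n}^{2} = \frac{AB(\alpha\beta)^{n-1}}{(ab)^{n}}\left[\gamma^{\ast}\delta^{\ast}\left(1-\frac{\beta^{r}}{\alpha^{r}}\right) + \delta^{\ast}\gamma^{\ast}\left(1-\frac{\alpha^{r}}{\beta^{r}}\right)\right].
\]
Applying $\alpha\beta = -ab$ to absorb $(\alpha\beta)^{n-1}/(ab)^{n}$, then factoring $\alpha^{r}-\beta^{r}$ out of the bracket over the common denominator $(\alpha\beta)^{r+1}$, produces the asserted right-hand side. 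The odd-$n$ case is handled by the same manipulation with $\gamma^{\ast\ast},\delta^{\ast\ast}$ in place of $\gamma^{\ast},\delta^{\ast}$, the only change being that the floor exponents sum to $n-1$ rather than $n$, which accounts for the denominator $(\alpha\beta)^{r}$ in the statement.

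The main obstacle, and the point where the proof is genuinely different from the quaternion version, is the non-commutativity \emph{and} non-associativity of the octonion algebra $\mathbf{O}$. Because $\gamma^{\ast}\delta^{\ast}\neq \delta^{\ast}\gamma^{\ast}$ in general, the two cross-terms in the expansion must be tracked separately and their internal order preserved throughout the manipulation; this is precisely why the answer is written as a difference $\gamma^{\ast}\delta^{\ast}\beta^{r} - \delta^{\ast}\gamma^{\ast}\alpha^{r}$ rather than being combined. The scalar factors $\alpha^{k},\beta^{k},A,B$ lie in the center of $\mathbf{O}$ and can therefore be moved freely past the octonion elements, which is exactly what allows the scalar-style factoring above to proceed despite the lack of associativity. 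Once this bookkeeping is carried out carefully, the remainder of the computation is a direct transcription of the quaternion argument with the octonion Binet constants substituted in.
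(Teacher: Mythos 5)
Your proposal takes exactly the route the paper intends: the paper omits this proof on the grounds that it is analogous to the quaternion Catalan identity, and your computation is that analogue carried out correctly, including the two points that actually need checking in $\mathbf{O}$ --- that every octonion product arising in the expansion has only two non-scalar factors (so non-associativity never enters, only distributivity and the centrality of the real scalars $A,B,\alpha^{k},\beta^{k}$), and that the order in $\gamma ^{\ast }\delta ^{\ast }$ versus $\delta ^{\ast }\gamma ^{\ast }$ must be preserved throughout. One caveat: your claim that ``the three indices $n-r$, $n+r$, $n$ fall under the same branch of the Binet formula'' holds only when $r$ is \emph{even}, whereas the theorem as printed hypothesizes ``odd integer $r$''; this is evidently a typo in the paper (the quaternion version assumes $r$ even, and the remark after the octonion theorem substitutes $r=2$), but your argument therefore proves the statement for even $r$ only, and you should state that correction explicitly rather than asserting a parity fact that is false under the hypothesis as written.
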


It is clear that, if we take $r=2$ in the above theorem we obtain the
Cassini's like identity.

\begin{theorem}
For any natural number $n$, we have%
\begin{equation}
OW_{2\left( n+1\right) }OQ_{2n}-OW_{2n}OQ_{2\left( n+1\right) }=\frac{1}{ab}%
\left[ A\gamma ^{\ast }\delta ^{\ast }\beta -B\delta ^{\ast }\gamma ^{\ast
}\alpha \right] .  \label{33}
\end{equation}
\end{theorem}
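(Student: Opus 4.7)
The plan is to substitute the Binet formulas into both sides of (\ref{33}) and simplify. For an even index $m$, formula (\ref{31}) gives $OW_m = \frac{1}{(ab)^{m/2}}\bigl(A\gamma^{\ast}\alpha^{m-1} - B\delta^{\ast}\beta^{m-1}\bigr)$, and formula (\ref{8}) gives $OQ_m = \frac{1}{(ab)^{m/2}(\alpha-\beta)}\bigl(\gamma^{\ast}\alpha^{m}-\delta^{\ast}\beta^{m}\bigr)$. Since $2n$ and $2(n+1)$ are both even, all four octonions on the left of (\ref{33}) are given by these expressions.

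Expanding each product $OW_{2(n+1)}OQ_{2n}$ and $OW_{2n}OQ_{2(n+1)}$ produces four summands, one for each pairing of $\gamma^{\ast}$ and $\delta^{\ast}$. The two ``diagonal'' summands (quadratic in $\gamma^{\ast}$, quadratic in $\delta^{\ast}$) come with identical scalar coefficients $A\alpha^{4n+1}/((ab)^{2n+1}(\alpha-\beta))$ and $B\beta^{4n+1}/((ab)^{2n+1}(\alpha-\beta))$ in both products, so they cancel in the difference. The surviving mixed terms are
\[
\frac{1}{(ab)^{2n+1}(\alpha-\beta)}\Bigl[A\gamma^{\ast}\delta^{\ast}\bigl(\alpha^{2n-1}\beta^{2n+2}-\alpha^{2n+1}\beta^{2n}\bigr) + B\delta^{\ast}\gamma^{\ast}\bigl(\alpha^{2n+2}\beta^{2n-1}-\alpha^{2n}\beta^{2n+1}\bigr)\Bigr].
\]
Factoring $\alpha^{2n-1}\beta^{2n}$ from the first inner difference and $\alpha^{2n}\beta^{2n-1}$ from the second, each contributes a factor of $\pm(\alpha^{2}-\beta^{2})=\pm ab(\alpha-\beta)$, which cancels the $\alpha-\beta$ in the denominator and introduces one extra factor of $ab$. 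Finally, the relation $(\alpha\beta)^{2n-1}=-(ab)^{2n-1}$ gives $\alpha^{2n-1}\beta^{2n}=-(ab)^{2n-1}\beta$ and $\alpha^{2n}\beta^{2n-1}=-(ab)^{2n-1}\alpha$, after which the scalar prefactors collapse to $1/ab$ and the expression reduces exactly to the right-hand side of (\ref{33}).

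The main subtlety is that the octonion algebra is non-commutative and non-associative, so the products $\gamma^{\ast}\delta^{\ast}$ and $\delta^{\ast}\gamma^{\ast}$ must be preserved in their original order throughout the expansion; indeed this is why both orderings appear in the final formula. Since $\alpha$, $\beta$, $A$, $B$ and $ab$ are real scalars, however, they commute and associate freely with every octonion factor, so moving scalar powers past $\gamma^{\ast}$ or $\delta^{\ast}$ is legitimate and the calculation reduces to a careful but routine bookkeeping exercise essentially identical to the quaternion proof of (\ref{24}).
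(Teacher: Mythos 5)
Your proposal is correct and follows exactly the route the paper intends: the paper omits the proof of this octonion identity (deferring to the quaternion analogue, whose proof is simply ``use the Binet formulas''), and your computation is a correct, fully worked-out version of that substitution, with the diagonal terms cancelling and the mixed terms reducing via $\alpha^{2}-\beta^{2}=ab(\alpha-\beta)$ and $\alpha\beta=-ab$ to $\frac{1}{ab}\left[A\gamma^{\ast}\delta^{\ast}\beta-B\delta^{\ast}\gamma^{\ast}\alpha\right]$. Your remark on preserving the order of $\gamma^{\ast}\delta^{\ast}$ versus $\delta^{\ast}\gamma^{\ast}$ while freely moving the real scalars is exactly the point that needs care here, and since only binary products of octonions occur, non-associativity causes no further trouble.
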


\begin{theorem}
For the generalized bi-periodic Fibonacci octonions, we have%
\begin{eqnarray}
\left( i\right) \sum_{r=0}^{n-1}OW_{r} &=&\frac{%
OW_{n}-OW_{n-2}+OW_{n+1}-OW_{n-1}}{ab}  \notag \\
&&-\frac{A\gamma ^{\ast }\beta ^{2}-B\delta ^{\ast }\alpha ^{2}-ab\left(
A\gamma ^{\ast \ast }\beta -B\delta ^{\ast \ast }\alpha \right) }{\left(
ab\right) ^{2}}  \label{34}
\end{eqnarray}%
\begin{equation}
\left( ii\right) \sum_{r=0}^{n-1}OW_{2r}=\frac{OW_{2n}-OW_{2n-2}}{ab}-\frac{%
A\gamma ^{\ast }\beta ^{2}-B\delta ^{\ast }\alpha ^{2}}{\left( ab\right) ^{2}%
}\text{ \ \ \ \ \ \ }  \label{35}
\end{equation}%
\begin{equation}
\left( iii\right) \sum_{r=0}^{n-1}OW_{2r+1}=\frac{OW_{2n+1}-OW_{2n-1}}{ab}+%
\frac{A\gamma ^{\ast \ast }\beta -B\delta ^{\ast \ast }\alpha }{ab}.
\label{36}
\end{equation}
\end{theorem}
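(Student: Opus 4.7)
The plan is to mirror the proof of the analogous summation theorem for the generalized bi-periodic Fibonacci quaternions, simply replacing $\alpha^{\ast},\beta^{\ast},\alpha^{\ast\ast},\beta^{\ast\ast}$ by $\gamma^{\ast},\delta^{\ast},\gamma^{\ast\ast},\delta^{\ast\ast}$ throughout. In the sums (i)--(iii) the quantities $A,B$ and all powers of $\alpha,\beta,ab$ are ordinary real numbers, so the constant octonion factors $\gamma^{\ast},\delta^{\ast},\gamma^{\ast\ast},\delta^{\ast\ast}$ only ever appear as a single right factor in each term; no triple octonion product is formed and the non-associativity of $\mathbf{O}$ plays no role in the calculation.

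For part (i), I would first split the sum by the parity of $r$. Suppose $n$ is odd; then
\[
\sum_{r=0}^{n-1}OW_{r}=\sum_{r=0}^{(n-1)/2}OW_{2r}+\sum_{r=0}^{(n-3)/2}OW_{2r+1}.
\]
Substituting the Binet formula (\ref{31}) into each piece and pulling the constant octonion factors out of the sums, the inner sums become finite geometric series in the ratios $\alpha^{2}/(ab)$ and $\beta^{2}/(ab)$. Closed-form evaluation of these geometric series, together with $\alpha\beta=-ab$ to clear denominators, produces terms of the form $(ab)^{-\lfloor n/2\rfloor}(A\gamma^{\ast}\alpha^{n-1}-B\delta^{\ast}\beta^{n-1})$ and $(ab)^{-\lfloor(n-1)/2\rfloor}(A\gamma^{\ast\ast}\alpha^{n-2}-B\delta^{\ast\ast}\beta^{n-2})$; applying the Binet formula in reverse identifies these with $OW_{n+1}$, $OW_{n}$, $OW_{n-1}$, $OW_{n-2}$ (shifted by a factor $1/(ab)$), which recombine as the telescoping expression $(OW_{n}-OW_{n-2}+OW_{n+1}-OW_{n-1})/(ab)$. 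The boundary contributions left over from the geometric-series formulas $(\alpha^{2}/ab)^{k+1}-1$ and $(\beta^{2}/ab)^{k+1}-1$ collect precisely to $(A\gamma^{\ast}\beta^{2}-B\delta^{\ast}\alpha^{2}-ab(A\gamma^{\ast\ast}\beta-B\delta^{\ast\ast}\alpha))/(ab)^{2}$. The case $n$ even is handled by the symmetric split $\sum_{r=0}^{n/2-1}OW_{2r}+\sum_{r=0}^{n/2-1}OW_{2r+1}$ and yields the same closed form.

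Parts (ii) and (iii) are strictly simpler: only one parity class contributes, so each reduces to a single geometric series (in $\alpha^{2}/(ab)$ and $\beta^{2}/(ab)$) whose evaluation and re-expression through the Binet formula gives the stated formulas directly. The main obstacle is purely bookkeeping: keeping track of the floor exponents $\lfloor n/2\rfloor$ versus $\lfloor(n-1)/2\rfloor$, the shifts $\alpha^{n-1}$ versus $\alpha^{n-2}$, and the residual boundary constants, so that after every simplification the right-hand sides match exactly the forms given in (\ref{34})--(\ref{36}). No conceptually new ingredient beyond the technique already displayed for the quaternion case is needed, which is why one can reasonably omit the full computation as the authors do.
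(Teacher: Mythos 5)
Your proposal is correct and follows exactly the route the paper intends: the paper omits this proof, stating that it is obtained analogously to the quaternion case, and your parity split, Binet substitution, and geometric-series evaluation reproduce that quaternion argument verbatim with $\gamma^{\ast},\delta^{\ast},\gamma^{\ast\ast},\delta^{\ast\ast}$ in place of $\alpha^{\ast},\beta^{\ast},\alpha^{\ast\ast},\beta^{\ast\ast}$. Your added remark that non-associativity is irrelevant because each octonion constant appears only once per term, multiplied by real scalars, is a correct justification for why the transfer is legitimate.
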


If we take the initial conditions $w_{0}=0$ and $w_{1}=1$ in the above
theorem, we obtain the results in \cite[Theorem 2.5]{yilmaz1}. If we take
the initial conditions $w_{0}=2$ and $w_{1}=b$ in the above theorem, we
obtain the results in \cite[Theorem 2.6]{yilmaz2}.

\section{Conclusion}

In this paper, we presented the generalized bi-periodic Fibonacci
quaternions $\left\{ W_{n}\right\} ,$ which is defined by $%
W_{n}=w_{n}e_{0}+w_{1}e_{1}+w_{2}e_{2}+w_{3}e_{3}$, where $%
w_{n}=aw_{n-1}+w_{n-2},$ if $n$ is even, $w_{n}=bw_{n-1}+w_{n-2},$ if $n$ is
odd with arbitrary initial conditions $w_{0},w_{1}$ and nonzero numbers $a,b$%
. By analogously, we defined the generalized bi-periodic Fibonacci octonions
and gave some basic properties of them. Our results not only gave a
generalization of the papers in \cite{chaos1, chaos2, yilmaz1, yilmaz2,
catarino}, but also included new results. The main contribution of this
research is one can get a great number of distinct quaternion and octonion
sequences by providing the initial values in the generalized bi-periodic
Fibonacci sequence $\left\{ w_{n}\right\} .$

\end{document}